\documentclass{llncs}

\usepackage{amsmath,amssymb}

\usepackage{comment}

\newcommand{\R}{\mathbb{R}}
\newcommand{\Z}{\mathbb{Z}}
\newcommand{\N}{\mathbb{N}}
\newcommand{\CA}{\mathcal{CA}}

\newcommand{\B}{\mathcal{B}}
\newcommand{\ID}{\mbox{id}}
\newcommand{\INF}{{}^\infty}

\begin{document}

\title{Geometry and Dynamics of the Besicovitch and Weyl Spaces\thanks{Research supported by the Academy of Finland Grant 131558}}
\titlerunning{Geometry and Dynamics of the Besicovitch and Weyl Spaces}

\author{Ville Salo\inst{1} \and Ilkka T\"{o}rm\"{a}\inst{2}}
\authorrunning{V. Salo and I. T\"{o}rm\"{a}}

\institute{University of Turku, Finland,\\
Turku Centre for Computer Science, Finland,\\
\email{vosalo@utu.fi}
\and
University of Turku, Finland,\\
\email{iatorm@utu.fi}}

\maketitle
\keywords{symbolic dynamics, subshifts, cellular automata, Besicovitch pseudometric, Weyl pseudometric}

\begin{abstract}
We study the geometric properties of Cantor subshifts in the Besicovitch space, proving that sofic shifts occupy exactly the homotopy classes of simplicial complexes. In addition, we study canonical projections into subshifts, characterize the cellular automata that are contracting or isometric in the Besicovitch or Weyl spaces, study continuous functions that locally look like cellular automata, and present a new proof for the nonexistence of transitive cellular automata in the Besicovitch space.
\end{abstract}

\setcounter{page}{1}

\section{Introduction}

In the field of symbolic dynamics and cellular automata, the Besicovitch and Weyl topologies (called \emph{global topologies} in this article) have become objects of profound interest. The Besovocitch space was introduced in \cite{CaFoMaMa97} to study the chaoticity of cellular automata. While the dynamical properties of CA on the global spaces have been researched to some extent, not much is known about the geometry of Cantor subshifts and cellular automata in the global spaces. This is an interesting direction of research, since in the zero-dimensional Cantor topology, we cannot really talk about the `shape' of objects. 

In Section~\ref{sec:Topology} we prove some basic topological results about subshifts with the Besicovitch topology. In particular, we show the contractibility of all mixing sofic shifts, and give a natural sufficient condition for two mappings to a sofic shift to be homotopic. The proofs for these results are extensions of the techniques used in \cite{BlFoKu97} and \cite{DoIw88} to prove the path-connectedness of the full shift.

In Section~\ref{sec:SimpComp}, we refine the above results, proving that sofic shifts exhibit exactly the same homotopy equivalence classes as simplicial complexes. We then show an example of a natural zero entropy subshift which is not homotopy equivalent to any simplicial complex.

One of the constructions of Section~\ref{sec:SimpComp} relies on the idea of `projecting' a sofic shift onto one of its sofic subshifts. In Section~\ref{sec:Projections}, we show that no nontrivial subshift $X$ is `convex' in the global spaces, in the sense that every element of the full shift would have a unique nearest approximating point in $X$. This contrasts the situation of simplicial complexes, where simplexes have unique nearest points in each of their faces.

In Section~\ref{sec:CA}, we consider $\N$-actions on subshifts with the global topologies given by cellular automata. We present a class of subshifts on which all cellular automaton having contracting, isometric or expanding dynamics have neighborhood size $1$. This is a result in the same spirit as the fact that cellular automata cannot be expansive on the full shift with the Besicovitch topology \cite{BlFoKu97}. We also present an unrelated result that on the full shift, cellular automata are exactly those continuous functions whose image at each point is computed by some (possibly different) cellular automaton. The main interest of our proof is the use of a topology on the space of all cellular automata.

In Section~\ref{sec:DynamicalProperties}, we give a new proof for the nonexistence of transitive cellular automata on the Besicovitch space.

In our proof, the use of Kolmogorov complexity, used to prove the result in \cite{BlCeFo03}, is replaced by a measure theoretical argument, and to the best of our knowledge, this is a first application of measure theory in the study of the global spaces. Finally, we construct a shift-invariant nonatomic measure on the full shift, the existence of which was asked in \cite{CaFoMaMa97}. The support of this measure is a relatively unnatural zero-entropy subshift, and it is in fact easily seen that no measure can be fully supported on the full shift.

\section{Definitions}

Let $\Sigma$ be a finite \emph{state set}, and denote by $H(u, v)$ the Hamming distance between two words $u, v \in \Sigma^*$ of equal length. We define three different pseudometrics on the \emph{full shift} $\Sigma^\Z$. The \emph{Cantor topology} is given by the metric $d_C(x, y) = 2^{-\delta}$ where $\delta = \min\{|i| \;|\; x_i \neq y_i\}$, the \emph{Besicovitch topology} by the pseudometric
\[ d_B(x, y) = \limsup_{n \in \N} {\frac{H(x_{[-n, n]}, y_{[-n, n]})}{2n + 1}}, \]
and the \emph{Weyl topology} by the pseudometric
\[ d_W(x, y) = \limsup_{n \in \N} \max_{m \in \Z} {\frac{H(x_{[m - n, m + n]}, y_{[m - n, m + n]})}{2n + 1}}. \]
The \emph{shift map} $\sigma : \Sigma^\Z \to \Sigma^\Z$ defined by $\sigma(x)_i = x_{i+1}$ is easily seen to be continuous in all the three topologies, and isometric in the Besicovitch and Weyl spaces. For two configurations $x,y \in \Sigma^\Z$, we denote $x \sim_B y$ if $d_B(x,y)=0$, and $x \sim_W y$ if $d_W(x,y)=0$. The letters $B$ and $W$ may be suppressed if they are clear from the context. In general, for each topological concept (when the meaning is obvious) we use the terms \emph{C-concept}, \emph{B-concept} and \emph{W-concept} when the Cantor, Besicovitch or Weyl topology, respectively, is used on $\Sigma^\Z$. The term \emph{G-concept} is used when the discussion applies to both the B-concept and the W-concept. If the prefix is omitted and no other indication is given, we always mean the corresponding C-concept. We collectively refer to the Besicovitch and Weyl topologies as the \emph{global topologies}. If $X \subset \Sigma^\Z$, we define the \emph{G-projection of $X$} as $\tilde X = \{ y \in \Sigma^\Z |\; \exists x \in X: x \sim_G y \}$.

In a pseudometric space $X$, for all points $x \in X$ and $\epsilon > 0$, we denote $B_\epsilon(x) = \{y \in X \;|\; d(x,y) < \epsilon \}$. For a (finite or infinite) word $w$, we denote by $w^R$ the reversal or $w$. For two words $v$ and $w$, we denote $v \sqsubset w$ and say that $v$ \emph{occurs in $w$}, if $v = w_{[i,i+|v|-1]}$ for some position $i$. For a set $X$ of words, $v \sqsubset X$ means that $v \sqsubset w$ for some $w \in X$. The notation $|w|_v$ denotes the number of occurrences of $v$ in $w$.

We assume a basic aqcuiantance with the notions of symbolic and topological dynamics. A clear presentation is given in \cite{LiMa95} and \cite{Ku03}, respectively.

Let $X$ and $Y$ be topological spaces. Two continuous functions $f, g : X \to Y$ are said to be \emph{homotopic}, denoted $f \sim g$, if there exists a continuous function $h : [0, 1] \times X \to Y$ such that $h(0, x) = f(x)$ and $h(1, x) = g(x)$ for all $x \in X$, and $h$ is then a \emph{homotopy} between $f$ and $g$. For $Z \subset X$, we say $f, g : X \to Y$ are \emph{homotopic relative to $Z$} if there exists a homotopy $h$ between $f$ and $g$ such that $h(r, z) = h(0, z)$ for all $r \in [0, 1], z \in Z$. We say two spaces $X$, $Y$ are \emph{homotopy equivalent} if there exist two continuous functions $f : X \to Y$ and $g : Y \to X$ such that $g \circ f \sim \ID_X$ and $f \circ g \sim \ID_Y$. We say that a space is \emph{contractible} if it is homotopy equivalent to the trivial space $\{0\}$.

An \emph{$n$-simplex} is an $n$-dimensional polytope which is the convex hull of its $n+1$ vertices. A \emph{simplicial complex} is a collection $K$ of simplices such that any face of an element of $K$ is in $K$, and any intersection of two simplices of $K$ is their common face. An \emph{abstract simplicial complex} with vertices in a finite set $V$ is a collection $K \subset 2^V$ closed under subsets. The corresponding complex can be realized in $\R^{|V|}$ by mapping $V$ via $f$ to a set of linearly independent points, and for all $T \in K$, including the convex hull of $f(T)$. The \emph{$k$-skeleton} of a complex is the subcomplex formed by its simplexes of dimension at most $k$. A simplicial complex (abstract or not) is frequently identified with the corresponding (realized in the case of an abstract complex) subset of some Euclidean space.

\section{Basic Topological Tools and Results}
\label{sec:Topology}

One of the first results on the full shift with a global topology was its path-connectivity, proved in \cite{FoKu98} by explicitly constructing a path between $\INF 0 \INF$ and $\INF 1 \INF$ in $\{0,1\}^\Z$. We present this construction, and then show another simpler way to construct paths in the Besicovitch space.

\begin{definition}
\label{def:WPath}
We define the function $U : [0, 1] \to \{0, 1\}^\N$ in the following way: For $x \in [0, 1)$, let $D(x) \in \{0, 1\}^\N$ be a binary extension of $x$ that does not end with infinitely many $1$'s. We let $y^0 = (0*) \INF, y^1 = (*1) \INF$ be points of $X = \{0, 1, *\}^\N$.

We define the interspersing operation $I : X \times X \to X$ by
\[
I(x, y)_i = \left\{\begin{array}{ll}
	y_j,				& \mbox{if } x_i = * \mbox{ and } |x_{[0, i)}|_* = j \\
	x_i,				& \mbox{otherwise}
\end{array}\right.,
\]
the sequence $(x^i)_{i \in \N}$ by $x^0 = * \INF, x^{i + 1} = I(x^i, y^{D(x)_i})$, and finally $U(x)$ as the point $y \in \{0, 1\}^\N$ which is the C-limit of the sequence $(x^i)_{i \in \N}$. It is easy to see that this limit exists, and no $*$ appears in it. We define $U(1) = 1 \INF$.

We define $T : [0, 1] \to \{0, 1\}^\Z$ by $T(x) = U(x)^R.U(x)$.
\end{definition}

It is easy to see that $T(0) = \INF 0 \INF$, and that both $T$ and $U$ are G-continuous.

\begin{definition}
\label{def:BPath}
We define the function $U' : [0, 1] \to \{0, 1\}^\N$ in the following way: Partition $\N$ into the intervals $[2^{n-1}-1, 2^n-1)$ for $n > 0$. The resulting partition is called $\mathcal{N}$. Then, define $U'(x)$ by filling each $[a, b) \in \mathcal{N}$ with $0^{\lfloor x(b-a) \rfloor}1^{\lceil (1-x)(b-a) \rceil}$. We define $T' : [0, 1] \to \{0, 1\}^\N$ by $T'(x) = U'(x)^R.U'(x)$.
\end{definition}

B-continuity is again easy to verify, but the functions are clearly not W-continuous. In this and the following section, we modify paths obtained in Definition~\ref{def:WPath} and Definition~\ref{def:BPath} to find paths between points of a sofic shift.

The following result, found as Theorem~2.2.3 in \cite{Ma96}, will be useful later:

\begin{lemma}[\cite{Ma96}]
\label{lemma:LiesImplyHomotopy}
Let $Y \subset \R^n$, and let $f, g : X \to Y$ be functions. If for each $x \in X, r \in [0, 1]$ we have $rf(x) + (1 - r)g(x) \in Y$, then $f$ and $g$ are homotopic relative to $\{x \;|\; f(x) = g(x)\}$.
\end{lemma}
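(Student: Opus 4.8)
The plan is to use the \emph{straight-line homotopy}. I would define $h : [0,1] \times X \to Y$ by
\[ h(r, x) = (1 - r)\, f(x) + r\, g(x). \]
The hypothesis is exactly what makes this well defined: for each $x \in X$ and $r \in [0,1]$, the point $h(r,x)$ is a convex combination of $f(x)$ and $g(x)$, and the family of such combinations is the same as the family $\{\, s f(x) + (1-s) g(x) \mid s \in [0,1]\,\}$ appearing in the statement (just reparametrized by $s = 1 - r$), so $h(r,x) \in Y$ by assumption.

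Then I would verify the three required properties. First, plugging in $r = 0$ and $r = 1$ gives $h(0, x) = f(x)$ and $h(1, x) = g(x)$, so $h$ interpolates between $f$ and $g$. Second, if $x$ satisfies $f(x) = g(x)$, then $h(r, x) = f(x)$ for every $r \in [0,1]$, so in particular $h(r, x) = h(0, x)$; hence the homotopy is stationary on $Z = \{x \mid f(x) = g(x)\}$, i.e.\ it is a homotopy relative to $Z$. Third, continuity: viewed as a map $[0,1] \times X \to \R^n$, $h$ is assembled from the (continuous) maps $f$ and $g$, scalar multiplication, and vector addition, hence is continuous; since its image is contained in $Y$, which carries the subspace topology, $h$ is continuous also as a map into $Y$. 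This is the only place the continuity of $f$ and $g$ (implicit in calling them maps $X \to Y$ in a homotopy-theoretic statement) enters.

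I do not expect any genuine obstacle here; the lemma is essentially a formal observation. The one point worth flagging is that convexity of $Y$ is \emph{not} assumed — only that the specific segments joining $f(x)$ to $g(x)$ lie in $Y$ — so the argument must invoke the hypothesis pointwise in $x$ rather than appealing to any global convexity, but this changes nothing in the proof above.
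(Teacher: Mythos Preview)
Your proof is correct and is exactly the standard straight-line homotopy argument. The paper does not supply its own proof of this lemma---it is quoted from \cite{Ma96} without proof---so there is nothing to compare against; your argument is precisely the one intended.
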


We will also prove a Besicovitch space version of Lemma~\ref{lemma:LiesImplyHomotopy} for functions mapping to sofic shifts. From this lemma, it is easy to conclude that a mixing sofic shift is B-contractible, that is, B-homotopy equivalent to a trivial simplex. We will refine this result in the next section.

Let $X$ be a sofic shift and $X_1, X_2$ two of its transitive components. The set of points that are left asymptotic to a point of $X_1$ and right asymptotic to a point of $X_2$ is called a \emph{bitransitive component of $X$}. We note that $X$ is covered by its bitransitive components, but they are not necessarily subshifts. For simplicity's sake, we will only work with one-way subshifts (subsets of $\Sigma^\N$) and assume that all transitive components are mixing until the beginning of Section~\ref{sec:Projections}. The generalization to arbitrary two-way subshifts is not difficult, when one replaces transitive components with bitransitive ones in the results, and accounts for the nonmixingness in a suitable way.

\begin{definition}
For a mixing sofic shift $X$ with mixing distance $m$, two points $x,y \in X$ and $r \in [0,1]$, define the point $A_X(r,x,y) \in X$ as follows. For all $i \in \N$ such that $i \in [a+m, b-m)$ for some $[a, b) \in \mathcal{N}$, we define
\[
A_X(r, x, y)_i = \left\{\begin{array}{ll}
	x_i,				& \mbox{if } U'(r)_{[i, i+m)} = 0^m \\
	y_i,				& \mbox{if } U'(r)_{(i-m, i]} = 1^m
\end{array}\right..
\]
The part left undefined has zero density, and can be filled to obtain a point of $X$. It is clear that the resulting \emph{average function} $A_X : [0,1] \times X^2 \to X$ is B-continuous, and we extend it to $\tilde X$ in a continuous way.
\end{definition}

Using the average function $A_X$, the following version of Lemma~\ref{lemma:LiesImplyHomotopy} for the Besicovitch space is easy to prove:

\begin{lemma}
\label{lemma:BHomotopy}
Let $Y$ be a sofic shift and let $f : X \to Y$ and $g : X \to Y$ be B-continuous functions. If for all $x \in X$, there exists a transitive component $Z_x$ of $Y$ such that $f(x) \in \tilde Z_x$ and $g(x) \in \tilde Z_x$, then the functions $f$ and $g$ are B-homotopic.
\end{lemma}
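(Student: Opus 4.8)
The plan is to mimic the classical argument behind Lemma~\ref{lemma:LiesImplyHomotopy}, using the average function $A_Y$ in place of the affine combination $rf(x)+(1-r)g(x)$. The key point is that $A_Y(r, \cdot, \cdot)$ plays the role of a "straight-line homotopy parameter" inside each transitive component: for a fixed transitive component $Z$ of $Y$, the map $(r, u, v) \mapsto A_Y(r, u, v)$ is a B-continuous map $[0,1] \times \tilde Z^2 \to \tilde Z$ with $A_Y(0, u, v) \sim_B u$ and $A_Y(1, u, v) \sim_B v$ (the parameter $r$ controls the density of $0$'s versus $1$'s in $U'(r)$, hence the asymptotic fraction of coordinates copied from $u$ versus $v$). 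First I would pin down exactly these boundary and continuity properties of $A_Y$, which are essentially asserted in the definition but should be stated cleanly, together with the observation that $A_Y$ respects the decomposition into transitive components in the sense that $u, v \in \tilde Z$ implies $A_Y(r,u,v) \in \tilde Z$ for all $r$.

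Next I would define the candidate homotopy $h : [0,1] \times X \to Y$ by $h(r, x) = A_Y(r, f(x), g(x))$. The hypothesis guarantees that for every $x$ both $f(x)$ and $g(x)$ lie in $\tilde Z_x$ for a common transitive component $Z_x$, so $h(r,x)$ is well defined and lands in $\tilde Z_x \subseteq Y$ (here one uses that $Y$ is taken up to $\sim_B$, or rather that we work in $\tilde Y$; this is the same mild abuse already present in the statement of the lemma). Then $h(0, x) \sim_B f(x)$ and $h(1, x) \sim_B g(x)$, which is equality in the Besicovitch pseudometric space, so $h$ is a homotopy from $f$ to $g$ in the required sense. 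Continuity of $h$ follows by composing the B-continuous maps $x \mapsto (f(x), g(x))$, the identity on $[0,1]$, and the B-continuous average function $A_Y$.

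The one genuine subtlety — and the step I expect to be the main obstacle — is the continuity of $h$ across points $x$ where the transitive component $Z_x$ jumps, i.e. where nearby points $x'$ satisfy $Z_{x'} \neq Z_x$. Since $A_Y$ is defined componentwise on $\tilde Y$ and extended continuously, one has to check that the extension is compatible with these transitions: if $x_n \to x$ with $f(x_n) \to f(x)$ and $g(x_n) \to g(x)$ in $d_B$, but the $Z_{x_n}$ differ from $Z_x$, the values $A_Y(r, f(x_n), g(x_n))$ must still converge to $A_Y(r, f(x), g(x))$. This is handled by the remark in the definition that $A_Y$ is extended to $\tilde Y$ in a B-continuous way: B-continuity is a statement about the pseudometric, not about the combinatorial component structure, so once the average function is B-continuous as a map $[0,1] \times \tilde Y^2 \to \tilde Y$, the composition is automatically continuous and no case analysis on components is needed. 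I would therefore phrase the proof so that all the work is pushed into the (already-granted) B-continuity of $A_Y$, and the remaining argument is the three-line composition-and-endpoints check above.
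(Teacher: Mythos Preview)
Your approach is essentially identical to the paper's: define $h(r,x)$ via the average function applied to $f(x)$ and $g(x)$, check the endpoints, and appeal to the B-continuity of the average together with the observation that the choice of transitive component does not disrupt continuity. The paper's proof is the one-line version of exactly this argument, writing $h(x,r)=A_{Z_x}(r,f(x),g(x))$ and remarking that the choice of $Z_x$ does not affect continuity.

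One small notational point: the average function $A_X$ is only defined for a \emph{mixing} sofic shift $X$, so strictly speaking there is no global $A_Y$ when $Y$ is not mixing; the paper writes $A_{Z_x}$ and then observes (as you do) that the particular component chosen is irrelevant for continuity. Your discussion of the ``component jump'' subtlety is exactly the content of that remark, so the substance is the same.
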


\begin{proof}
Define the homotopy $h : X \times [0,1] \to Y$ by $h(x,r) = A_{Z_x}(r, f(x), g(x))$. Note that by the way we defined $A_X$, the choice of the component $Z_x$ does not affect the continuity of $h$. \qed
\end{proof}

\begin{corollary}
\label{corollary:BSoficCont}
A mixing sofic shift is B-contractible.
\end{corollary}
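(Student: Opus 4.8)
The goal is to show that a mixing sofic shift $X$ is B-contractible, i.e. B-homotopy equivalent to the trivial space $\{0\}$. My plan is to derive this immediately from Lemma~\ref{lemma:BHomotopy}. Fix any point $x_0 \in X$ and let $c : X \to X$ be the constant map $c(x) = x_0$, which is trivially B-continuous. I will apply Lemma~\ref{lemma:BHomotopy} to the pair $f = \ID_X$ and $g = c$.

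The verification of the hypothesis of Lemma~\ref{lemma:BHomotopy} is where a small amount of care is needed, but it is precisely where mixingness is used: since $X$ is mixing it is itself a single transitive component, so for every $x \in X$ we may simply take $Z_x = X$. Then $f(x) = x \in X \subset \tilde X$ and $g(x) = x_0 \in X \subset \tilde X$, so both land in $\tilde Z_x$. Hence Lemma~\ref{lemma:BHomotopy} gives that $\ID_X$ and the constant map $c$ are B-homotopic. This already shows $X$ is B-contractible by definition, since the constant map factors through $\{0\}$: writing $\iota : \{0\} \to X$ for $0 \mapsto x_0$ and $p : X \to \{0\}$ for the unique map, we have $p \circ \iota = \ID_{\{0\}}$ and $\iota \circ p = c \sim \ID_X$.

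I expect there to be essentially no obstacle here — the corollary is a direct specialization of the preceding lemma, and the only thing to observe is the (trivial) fact that a mixing sofic shift has a single transitive component, so the component-selection mechanism of $A_X$ is vacuous. The one place a careful reader might want reassurance is that the constant map is genuinely B-continuous and that $x_0 \in \tilde Z_x$; both are immediate. So the proof will consist of a single sentence invoking Lemma~\ref{lemma:BHomotopy} with $g$ constant.

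\begin{proof}
Fix $x_0 \in X$ and let $c : X \to X$ be the constant map with value $x_0$. Since $X$ is mixing, it is its own unique transitive component, so taking $Z_x = X$ for all $x \in X$, the pair $f = \ID_X$ and $g = c$ satisfies the hypothesis of Lemma~\ref{lemma:BHomotopy}. Thus $\ID_X$ is B-homotopic to $c$, and since $c$ factors through the trivial space, $X$ is B-contractible. \qed
\end{proof}
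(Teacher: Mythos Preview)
Your proof is correct and is exactly the argument the paper intends: the corollary is stated without proof, immediately after Lemma~\ref{lemma:BHomotopy}, precisely because applying that lemma with $f=\ID_X$ and $g$ a constant map (taking $Z_x=X$ since a mixing sofic shift is its own unique transitive component) yields the contractibility at once.
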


\begin{theorem}
\label{theorem:SoficClosed}
Let $X$ be a sofic shift. Then $\tilde{X}$ is G-closed.
\end{theorem}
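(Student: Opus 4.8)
The plan is to show that the complement $\Sigma^\Z \setminus \tilde X$ is G-open. So fix a configuration $y \notin \tilde X$; I want to produce $\epsilon > 0$ with $B_\epsilon(y) \cap \tilde X = \emptyset$. The key quantitative handle is the following: since $X$ is sofic, there is a finite automaton (a labeled graph) $\mathcal G$ presenting $X$, and a word $w$ occurs in some point of $X$ iff it is the label of a path in $\mathcal G$. The relevant fact is that the density of "bad" positions a configuration is forced to have, relative to $X$, is controlled by how often long forbidden-ish patterns appear. Concretely, for a sofic shift there is, for each $n$, a bound on the number of length-$n$ words appearing in $X$, but more usefully: if $z \in \tilde X$, then $z \sim_G x$ for some $x \in X$, so $z$ differs from an honest point of $X$ in a set of positions of (upper Banach, in the Weyl case) density $0$.

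First I would reduce "$y \notin \tilde X$" to a statement about a fixed window length. I claim there exist $\ell \in \N$ and $\delta > 0$ such that in every sufficiently long interval $[m-n,m+n]$ (resp. in $y_{[-n,n]}$ for the Besicovitch case), the word $y_{[m-n,m+n]}$ must be "$\ell$-bad" on a set of positions of density at least $\delta$, where a position $i$ is $\ell$-bad in a word $u$ if $u_{[i,i+\ell)}$ cannot be completed to a biinfinite point of $X$ agreeing with $u$ outside a small neighborhood — more precisely, I will use that membership in $\tilde X$ is equivalent to: for every $\eta>0$ there is a point $x\in X$ within Hamming-density $\eta$ of $y$ on all long windows. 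Negating this gives a single $\eta_0 > 0$ such that no point of $X$ stays within density $\eta_0$ of $y$ on arbitrarily long windows; unpacking with a compactness/König's lemma argument on the presentation graph $\mathcal G$ yields a finite $\ell$ so that the obstruction is already visible in length-$\ell$ windows with a uniform positive density $\delta$.

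Then I would set $\epsilon = \delta/3$ (any fraction of $\delta$ small enough to absorb the additive error from the window-length $\ell$ overhead works). Suppose $z \in B_\epsilon(y)$, i.e.\ $d_G(y,z) < \delta/3$. If $z \in \tilde X$, then $z \sim_G x$ for some $x \in X$, so $d_G(y,x) < \delta/3$ as well (triangle inequality, $d_G(z,x)=0$). But $x \in X$ means $x$ has \emph{no} $\ell$-bad positions at all, so on long windows $x$ agrees with a legal $X$-word everywhere; transferring via $d_G(y,x) < \delta/3$, the configuration $y$ can have $\ell$-bad positions only where it differs from $x$, plus an $\ell$-thickening of that difference set, which has density at most $(\ell+1)\cdot\delta/3 < \delta$ once we also choose $\epsilon$ small relative to $\ell$ — contradicting that $y$ is forced to be $\ell$-bad on density $\geq \delta$. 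Hence $B_\epsilon(y) \cap \tilde X = \emptyset$, and $\tilde X$ is G-closed.

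The main obstacle is the first reduction: passing from the qualitative "$y \notin \tilde X$" to the uniform finite-window bound $(\ell,\delta)$. This is where soficness is essential (it would fail for general subshifts) and where one must be careful to do the compactness argument in the sofic presentation rather than in $X$ itself, and to treat the Weyl case (sup over $m$) correctly — there the density-$0$ discrepancy set must be controlled in the upper Banach density, which requires that the almost-covering point $x$ be chosen uniformly, not window-by-window. I expect this to be a routine but slightly delicate König's-lemma argument on the labeled graph $\mathcal G$; everything after it is the triangle inequality plus density bookkeeping.
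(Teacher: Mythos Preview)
Your plan is logically sound in outline, but you have pushed the entire content of the theorem into what you call the ``first reduction'' and then not carried it out. The equivalence you propose to ``use'' --- that $y\in\tilde X$ iff for every $\eta>0$ some $x\in X$ stays within Hamming density $\eta$ of $y$ on all long windows --- \emph{is} the statement that $\tilde X$ is G-closed: one direction is trivial, and the other (``$d_G(y,X)=0\Rightarrow y\in\tilde X$'') is precisely what has to be proved. Once that equivalence is in hand, the triangle-inequality bookkeeping in your second step is superfluous: $d_G(y,X)=\eta_0>0$ already gives $B_{\eta_0}(y)\cap\tilde X=\emptyset$ directly. So the step you label ``routine but slightly delicate'' is the whole proof, and your sketch of it (a K\"onig's-lemma argument on the presentation graph) does not indicate how a single point of $X$ close to $y$ is to be produced from the assumption that many different points of $X$ are close to $y$ on different scales.

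The paper does not try to extract a finite window length $\ell$ at all. It first reduces to a mixing component $Y$ with mixing distance $k$, assumes $d_W(x,Y)=0$, and explicitly \emph{constructs} a point $y\in Y$ with $d_W(x,y)=0$: greedily copy the longest $Y$-legal prefix of $x$, skip $k$ cells, copy the next maximal $Y$-legal segment, skip $k$ cells, and so on, filling the skipped cells via mixingness. The skipped set $D$ is then shown to have upper Banach density $0$ by taking any $z\in Y$ with $d_W(x,z)$ small and noting that between two consecutive coordinates where $x$ and $z$ disagree, $D$ contributes at most $k$ cells. Your proposal contains neither the reduction to mixing components nor any gluing mechanism; a compactness argument on the presentation graph alone will not manufacture one without essentially rediscovering this greedy construction. (There is also an arithmetic slip in your second step: $(\ell+1)\cdot\delta/3<\delta$ fails for $\ell\ge 2$; you need $\epsilon<\delta/(\ell+1)$, as you half-acknowledge.)
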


\begin{proof}
We prove the theorem in the Weyl case, the Besicovitch case being even easier. It is enough to show that $\tilde{Y}$ is W-closed for every mixing component $Y$ of $X$, so assume on the contrary that $x \notin \tilde{Y}$ and $d_W(x, Y) = 0$ for some $x \in \Sigma^\Z$. Let $k$ be the mixing distance of $Y$, and build a new point $y \in Y$ as follows. Let $w_0$ be the maximal prefix of $x$ such that $w_0 \sqsubset Y$. Let $y_{[0,|w_0|-1]} = w_0$, and let $w_1$ be the maximal prefix of $x_{[|w_0|+k,\infty)}$ such that $w_1 \sqsubset Y$. Let $y_{[|w_0|+k,|w_0|+k+|w_1|-1]} = w_1$, and continue the process infinitely. The part $D \subset \N$ left undefined can be filled arbitrarily using mixingness so that $y \in Y$.

We claim that $d_W(x,y) = 0$. If this were not the case, we would have an $\epsilon > 0$ and a sequence of pairs $(m_i,n_i) \in \N^2$ such that $n_i \longrightarrow \infty$ and
\[ |D \cap [m_i,m_i+n_i-1]| \geq \epsilon n_i + 2k \]
for all $i$. Since $d_W(x, Y) = 0$, there exists $z \in Y$ with $d_W(x,z) < \frac{\epsilon}{2(k+1)}$. Choose $N \in \N$ such that $H(x_{[m,m+n-1]},z_{[m,m+n-1]}) < \frac{\epsilon n}{k+1}$ for all $m \in \N$ and $n \geq N$. Then choose $i$ such that $n_i \geq N$.

Partition the segment $S = [m_i,m_i+n_i-1]$ to the at most $\frac{\epsilon n_i}{k+1} + 2$ subsegments whose first coordinates are $m_i$ and $\{ j \in S \;|\; x_j \neq z_j \}$. By the definition of $y$, we have $|D \cap s| \leq k$ for all $s = [a,b] \in S$: if $j = \min D \cap s$, then $x_{[j+k,b]} \sqsubset Y$, and thus $D \cap [j+k,b] = \emptyset$. But then
\[ |D \cap [m_i,m_i+n_i-1]| \leq \frac{k \epsilon n_i}{k+1} + 2k < \epsilon n_i + 2k, \]
a contradiction. \qed
\end{proof}

\begin{theorem}
\label{theorem:SubshiftNotClosed}
There exists a recursive two-way subshift $X$ such that $\tilde{X}$ is not closed in either of the global topologies.
\end{theorem}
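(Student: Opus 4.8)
The plan is to exploit the contrast with Theorem~\ref{theorem:SoficClosed}. That theorem succeeds because in a mixing sofic shift a subword is \emph{recognisable and correctable with a bounded look-ahead} (the mixing distance), so a configuration at $G$-distance $0$ from $Y$ can be assembled from maximal $Y$-subwords of itself and thereby realised inside $Y$. I want a \emph{recursive but non-sofic} subshift $X$ together with a configuration $z$ such that $z$ is $G$-approximated by points of $X$ (so $z$ lies in the $G$-closure of $\tilde X$) yet $z$ is not $G$-equivalent to any point of $X$ (so $z\notin\tilde X$). Since $d_B\le d_W$ and $\tilde X^{\,W}\subseteq\tilde X^{\,B}$, it suffices to produce $z$ and points $x_n\in X$ with $d_W(x_n,z)\to 0$ and to show that $d_B(x,z)>0$ for every $x\in X$; this makes $\tilde X$ fail to be closed in both global topologies at once.

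For the construction I would set up an infinite hierarchy of synchronisation markers at a super-exponentially growing recursive sequence of scales $\ell_1<\ell_2<\cdots$ (say $\ell_{k+1}=2^{\ell_k}$), using one or two auxiliary symbols. The configuration $z$ is a fixed recursive configuration carrying the level-$k$ marker pattern around a designated block of length $\ell_k$ and its mirror image, with ordinary content elsewhere; $X$ is the subshift of all configurations that respect the marker discipline \emph{at every level}. Non-soficness comes from the fact that checking consistency between level $k$ and level $k+1$ requires a look-ahead of order $\ell_{k+1}$, which is unbounded, so this is a genuinely effective ($\Pi^0_1$, and recursive once decidability of the language of $X$ is verified) but not sofic condition -- exactly the feature that defeats the stitching argument of Theorem~\ref{theorem:SoficClosed}. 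The approximants are obtained from $z$ by keeping the markers of levels $\le n$ together with the finitely many corrections they force, while diluting every marker of level $>n$ so that the disagreement set with $z$ has upper Banach density $O(1/n)$; then $d_W(x_n,z)\to 0$ and hence $\inf_{x\in X}d_W(x,z)=0$. One should also check that the Cantor-limit of the $x_n$ disagrees with $z$ on a set of positive upper density, so it is already far from $z$; this rules out the trivial way the infimum could be attained.

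The heart of the proof, and the step I expect to be the main obstacle, is to show that $\inf_{x\in X}d_B(x,z)=0$ is \emph{not attained} -- equivalently, that the set $\{\,d_B(x,z)\mid x\in X\,\}\subseteq[0,1]$ is not closed, something which by Theorem~\ref{theorem:SoficClosed} can never happen for a sofic shift. The subtlety is that no uniform bound $d_B(x,z)\ge c>0$ over $X$ is available, since such a bound would immediately contradict the existence of the approximants; the argument must instead show, for each individual $x\in X$, that respecting the marker discipline at \emph{all} levels is incompatible with $x$ agreeing with $z$ on a set of density $1$, the obstruction degrading to $0$ only along sequences that are permitted to relax higher and higher levels (the $x_n$). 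The one reason this is not outright impossible is that $d_B$ and $d_W$ are not Cantor-lower-semicontinuous, so Cantor-compactness of $X$ does not force attainment; designing the hierarchy so that it is simultaneously recursive, diluteable (yielding the $x_n$), and yet individually non-negligible for honest members of $X$ is where the real work and the only genuinely delicate density estimates will lie. Once $d_B(x,z)>0$ is established for every $x\in X$, the point $z$ witnesses that $\tilde X$ is closed in neither global topology.
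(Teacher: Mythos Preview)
Your reduction is sound --- it suffices to exhibit $z$ with $d_W(x_n,z)\to 0$ for some $x_n\in X$ while $d_B(x,z)>0$ for every $x\in X$ --- but what follows is an outline rather than a proof. You correctly isolate the crux (no $x\in X$ has $d_B(x,z)=0$) and then explicitly defer it: ``designing the hierarchy so that it is simultaneously recursive, diluteable \dots and yet individually non-negligible for honest members of $X$ is where the real work \dots will lie.'' Without a concrete marker discipline and an argument that every honest point of $X$ disagrees with $z$ on a set of positive upper density, nothing has been proved. There is also a coherence issue you do not address: if $z$ itself ``carries the level-$k$ marker pattern'' at every $k$, you must say which rule of the discipline $z$ violates while each $x_n$ does not; as written, it is not clear why $z\notin X$.

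The paper sidesteps the whole difficulty with a one-paragraph construction built on the G-continuous path $U:[0,1]\to\{0,1\}^\N$ of Definition~\ref{def:WPath}. Take $X$ to be the orbit closure of $\{\,{}^\infty a\,b^m c^n\,U(m/n)\mid m,n\in\N\,\}$: the finite block $b^m c^n$ records a rational, and the right tail is $U$ of that rational. The witness is $y={}^\infty a\,U(\sqrt 2/2)$. Approximation is immediate from G-continuity of $U$ (take rationals $m_k/n_k\to\sqrt 2/2$; the density-zero prefix $b^{m_k}c^{n_k}$ is harmless). The step you flag as the obstacle becomes a one-line observation: any $x\in X$ that is left $a$-infinite and contains a $0$ or $1$ is forced to have right tail $U(m/n)$ for some fixed rational, and since $d_G(U(r),U(s))\ge|r-s|$, one gets $d_G(x,y)\ge|m/n-\sqrt 2/2|>0$. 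Encoding the parameter in a finite, hence density-zero, word --- together with the fact that $U$ separates distinct reals by positive G-distance --- is precisely what converts your ``main obstacle'' into a triviality; your hierarchical scheme has no analogue of this rigidity, which is why you were unable to close the argument.
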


\begin{proof}
One such subshift $X$ is the orbit closure of $\{{}^\infty a b^m c^n U(\frac{m}{n}) \;|\; m, n \in \N\}$, which is clearly recursive. The point $y = {}^\infty a U(\frac{\sqrt{2}}{2})$ is not in $\tilde{X}$. For otherwise, let $x \in X$ be such that $d(x, y) = 0$. Then $x$ is $a$-infinite to the left and must contain either a $0$ or a $1$. This means it is actually of the form ${}^\infty a b^m c^n U(\frac{m}{n})$. But then, the distance between the right tails is at least $|\frac{m}{n} - \frac{\sqrt{2}}{2}| > 0$. We also obtain arbitrarily good approximations for $y$ by taking points of this form with rational approximations of $\frac{\sqrt{2}}{2}$. \qed
\end{proof}

\section{Simplicial Complexes and Sofic Shifts}
\label{sec:SimpComp}

The following proposition is easy to prove with a construction similar to that in Definition~\ref{def:WPath}.

\begin{proposition}
\label{proposition:EmbeddingOfRN}
There is a G-embedding of $\R^\N$ to $\{0,1\}^\N$.
\end{proposition}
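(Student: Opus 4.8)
The plan is to bootstrap from the one-dimensional picture: I would first check that the path map $U\colon[0,1]\to\{0,1\}^\N$ of Definition~\ref{def:WPath} is already a G-embedding of $[0,1]$, and then assemble countably many scaled copies of $U$, one per coordinate of $\R^\N$, along a density partition of $\N$ — which is exactly the interspersing idea of Definition~\ref{def:WPath} carried out across coordinates rather than across the bits of a single real.

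To see that $U$ is a G-embedding it remains only to bound $d_G(U(x),U(x'))$ from below, since G-continuity of $U$ is already stated. The key point is that the density of the symbol $1$ in $U(x)$ is \emph{exactly} $x$, and uniformly so: at the $(i{+}1)$-st interspersing step the positions still carrying $*$ form an arithmetic progression of density $2^{-i}$, of which a uniformly distributed sub-progression of density $2^{-(i+1)}$ is irrevocably committed to the value $D(x)_i$, so $\operatorname{dens}_1 U(x)=\sum_{i:D(x)_i=1}2^{-(i+1)}=x$, with the fraction of $1$'s in every long enough window of $U(x)$ arbitrarily close to $x$. Since the Hamming density of disagreement between two sequences is at least the difference of the densities of any fixed letter, this gives $d_B(U(x),U(x'))\ge|x-x'|$, and a fortiori $d_W(U(x),U(x'))\ge d_B(U(x),U(x'))\ge|x-x'|$. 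Hence $U$ is injective up to $\sim_G$ and $U^{-1}$ is G-continuous on the image, so $U$ is a G-embedding.

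Now fix a partition $\N=\bigsqcup_{k\in\N}S_k$ with $S_k=2^{k+1}\N+2^k$ for $k\ge1$ and $S_0$ the complement; each $S_k$ is an arithmetic progression of density $d_k=2^{-(k+1)}$ (absorbing $0$ into $S_0$ changes nothing), and $\sum_k d_k=1$. Writing $s_k(0)<s_k(1)<\cdots$ for the elements of $S_k$, define $\Phi\colon[0,1]^\N\to\{0,1\}^\N$ by $\Phi(x)_{s_k(j)}=U(x_k)_j$. Because each $S_k$ is a genuine arithmetic progression, the G-distance decomposes cleanly: $d_B(\Phi(x),\Phi(x'))=\sum_k d_k\,d_B(U(x_k),U(x'_k))$, and $d_k\,d_W(U(x_k),U(x'_k))\le d_W(\Phi(x),\Phi(x'))\le\sum_k d_k\,d_W(U(x_k),U(x'_k))$. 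G-continuity of $\Phi$ follows: given $\epsilon>0$, pick $m$ with $\sum_{k>m}d_k<\epsilon/2$ and then use G-continuity of $U$ on the finitely many coordinates $k\le m$. Injectivity up to $\sim_G$ and G-continuity of $\Phi^{-1}$ on $\Phi([0,1]^\N)$ both follow from the per-coordinate lower bound $d_G(\Phi(x),\Phi(x'))\ge d_k\,d_G(U(x_k),U(x'_k))$ together with $d_G(U(x_k),U(x'_k))\ge|x_k-x'_k|$. Finally, since $\R\cong(0,1)\subseteq[0,1]$, we get $\R^\N\cong(0,1)^\N\hookrightarrow[0,1]^\N\xrightarrow{\ \Phi\ }\{0,1\}^\N$, a composition of G-embeddings, which proves the proposition.

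The decomposition bookkeeping is routine; the only point needing a little care is the Weyl lower bound $d_W(\Phi(x),\Phi(x'))\ge d_k\,d_W(U(x_k),U(x'_k))$, i.e.\ verifying that a window in which $U(x_k)$ and $U(x'_k)$ disagree with high density pulls back, under the arithmetic embedding $S_k\hookrightarrow\N$, to a window of $\Phi$ of $2^{k+1}$ times the length with disagreement density at least $d_k$ times as large. This is precisely where the choice of $S_k$ as an arithmetic progression (rather than an arbitrary density-$d_k$ set) is needed, and I expect it to be the only mild obstacle.
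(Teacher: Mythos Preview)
Your proposal is correct and follows the same construction as the paper: partition $\N$ into arithmetic progressions of dyadic density and embed each coordinate of $\R^\N$ (via $\R\cong(0,1)$ and the path $U$) into its own progression; the paper's proof is just a one-paragraph sketch of exactly this idea, so your write-up is a strict expansion of it. The only cosmetic slip is the asserted \emph{equality} $d_B(\Phi(x),\Phi(x'))=\sum_k d_k\,d_B(U(x_k),U(x'_k))$, which in general is only $\le$ (upper density is merely subadditive), but since your argument actually uses only this upper bound together with the per-coordinate lower bound $d_G(\Phi(x),\Phi(x'))\ge d_k\,d_G(U(x_k),U(x'_k))$, nothing is affected.
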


\begin{proof}
We begin with marking all the even coordinates as undefined, and injectively map $\R$ first onto $(0,1)$ using some standard function, and then into the odd coordinates using a $U$-like function. Then, we take the set of coordinates $i$ with $i \equiv 2 (\mod 4)$, and injectively map $\R$ into them as above. Inductively we obtain the desired embedding. \qed
\end{proof}

As a corollary, we obtain that all simplicial complexes can be embedded in $\Sigma^\Z$. In particular, every finitely generated group with a finite set of defining relations can be implemented as the fundamental group of some path-connected subset of $\Sigma^\Z$ \cite{Ma96}. However, the image in this map is not a very natural subset, and uses a very small slice of the space. Since $\R^\N$ and $\Sigma^\Z$ are not G-locally homeomorphic, this cannot be improved much. However, using the concept of homotopy equivalence we are able to prove a strong connection between simplicial complexes and B-sofic shifts, namely that they occupy the exact same homotopy equivalence classes. Since the fundamental group is an invariant of homotopy equivalence, the claims of the previous paragraph will still hold.

\begin{lemma}
\label{lemma:SFTInSofic}
If $X$ is a mixing sofic shift with positive entropy, there exists a mixing SFT $Y \subset X$ with positive entropy.
\end{lemma}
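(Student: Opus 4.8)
The plan is to pass to an irreducible right-resolving presentation, work at the level of its edge shift, and produce $Y$ as a free concatenation of a carefully chosen finite set of loop labels.

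First, let $\mathcal{G}$ be an irreducible right-resolving presentation of $X$ (for instance its Fischer cover, see \cite{LiMa95}); since $X$ is mixing, $\mathcal{G}$ may be taken primitive. The labeling map is finite-to-one, hence entropy-preserving, so the edge shift of $\mathcal{G}$ has entropy $h(X)>0$ and the adjacency matrix of $\mathcal{G}$ has Perron eigenvalue $\lambda>1$. By Perron--Frobenius, at any fixed vertex $v$ the number of loops at $v$ of length $n$ is bounded below by $\lambda^n/\mathrm{poly}(n)$, so for all large $n$ there are at least two such loops, and --- since $\mathcal{G}$ is right-resolving and they start at the common vertex $v$ --- distinct loops at $v$ carry distinct labels. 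Using primitivity, I would also record that $v$ lies on loops whose lengths have greatest common divisor $1$.

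The technical heart is to upgrade ``many loop labels at $v$'' to a \emph{comma-free code}: a set $C=\{c_0,\dots,c_{k-1}\}$, $k\ge 2$, of words of a common length $L$, each of which is a loop label at $v$, such that no $c_i$ occurs in any concatenation $c_jc_{j'}$ except at a block boundary. One first extracts two distinct first-return loop labels $a\neq b$ at $v$ (right-resolving implies that the first-return labels at $v$ form a prefix code, which for instance rules out $a^{|b|}=b^{|a|}$), and then builds comma-free codewords out of $a$ and $b$ --- e.g.\ by substituting $a$ and $b$ into a fixed comma-free code over a two-letter alphabet --- after replacing $a,b$ if necessary by longer derived loop labels so that the substituted words admit no overlapping occurrences off the block grid; alternatively one invokes the known exponential-size comma-free codes and extracts, by a counting argument, one whose codewords happen to be loop labels, using that non-primitive words and words overlapping a fixed primitive word are exponentially scarcer than loop labels. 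I expect the elimination of these off-grid ``phantom'' occurrences to be essentially the only real difficulty; everything else is bookkeeping.

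Finally, let $Y\subseteq X$ be the orbit closure of the set of bi-infinite free concatenations of the words in $C$. Comma-freeness makes the block grid of any point of $Y$ recoverable from any window of length $2L$, so $Y$ is defined by finitely many forbidden patterns and hence is an SFT; it lies in $X$ by construction and has entropy $(\log k)/L>0$. Mixing is obtained by a routine gcd adjustment: folding in an extra loop label at $v$ of length coprime to $L$ (available since $\mathcal{G}$ is primitive) destroys the residual period of the concatenation structure. The only nonroutine ingredient is thus the combinatorics-on-words step producing the comma-free code.
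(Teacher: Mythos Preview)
Your plan is in the right spirit---build $Y$ as free concatenations of a finite set of ``blocks'' that are loop labels at a single vertex of a presentation---but it is considerably more involved than the paper's argument, and the final ``routine gcd adjustment'' hides a real difficulty.

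The paper bypasses the comma-free--code combinatorics entirely by using a single \emph{unbordered synchronizing word} $w\sqsubset X$ (whose existence is quoted from \cite{BePe06}). Mixing then supplies words $u,v$ not containing $w$ with $wuw,\,wvw\sqsubset X$ and $|v|=|u|+1$, and $Y$ is simply the set of bi-infinite concatenations of $wu$ and $wv$. Here $w$ does triple duty: synchronization guarantees every concatenation lies in $X$; unborderedness together with $w\not\sqsubset u,v$ makes the occurrences of $w$ a locally detectable set of markers, so $Y$ is an SFT; and the block lengths $|wu|,|wu|+1$ are coprime, so $Y$ is mixing. No separate comma-free construction is needed, because the marker $w$ already forces unique parsing.

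In your outline, by contrast, the comma-free step is only sketched, and---more seriously---the mixing repair is not as routine as you suggest. With all codewords of a common length $L>1$, the orbit closure $Y$ is typically \emph{not} mixing (e.g.\ for $C=\{0^L,1^L\}$ the word $01$ occurs only at phase $L-1$, forcing gaps $\equiv 0\pmod L$). Your fix is to adjoin an extra loop label $c'$ of length coprime to $L$; but once $|c'|\neq L$, the constant-length comma-free property no longer controls overlaps: $c'$ may occur properly inside some $c_ic_j$, or some $c_i$ inside $c'c_j$, and then the $2L$-window parsing argument for the SFT property breaks down. This can be salvaged, but it requires exactly the kind of marker/unborderedness control that the paper's synchronizing word $w$ provides for free. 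So either you must redo the code construction with two coprime lengths and a common unbordered prefix---at which point you have essentially rediscovered the paper's proof---or you must supply a genuine argument that the enlarged code still yields an SFT.
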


\begin{proof}
Let $w \sqsubset X$ be an unbordered synchronizing word (which always exists by \cite[Lemma 1]{BePe06}) and let $u, v$ be distinct words that do not contain $w$ such that $w u w \sqsubset X$, $w v w \sqsubset X$ and $|v| = |u| + 1$ (they exist since $X$ is mixing). Let $Y$ the subshift of $X$ whose points are concatenations of $wu$ and $wv$. Now $Y$ is a mixing SFT which clearly has positive entropy. \qed
\end{proof}

\begin{definition}
Let $V$ be a real vector space, and let $x \in V^n$ and $y \in \R^n$ with $y_i \neq 0$ for all $i$. The \emph{inverse $y$-weighted average of $x$} is defined as
\[ W(x,y) = \frac{\sum_{i=1}^n y_i^{-1} x_i}{\sum_{i=1}^n y_i^{-1}}. \]
\end{definition}

\begin{theorem}
\label{theorem:SimplicialComplexes}
Let $X$ be a mixing sofic shift with positive entropy and let $K$ be an abstract simplicial complex. Then there exists a sofic subshift $Y$ of $X$ such that $\tilde Y$ is B-homotopy equivalent to $K$.
\end{theorem}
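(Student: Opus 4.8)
The idea is to realize the abstract simplicial complex $K$ (with vertex set $V$) as a B-homotopy retract of a carefully constructed sofic subshift $Y \subseteq X$. First I would use Lemma~\ref{lemma:SFTInSofic} to pass to a mixing SFT $Z \subseteq X$ with positive entropy, so that we have enough combinatorial room: inside $Z$ we can find, for each vertex $v \in V$, a distinct synchronizing word $p_v$, all of the same length, pairwise "incompatible" in a strong sense (e.g. no two overlap nontrivially and each is unbordered), together with filler words allowing free concatenation. Then I would build $Y$ as the orbit closure of the set of configurations that, on each interval $[a,b) \in \mathcal{N}$ of the Besicovitch partition, display the words $\{p_v : v \in T\}$ for some simplex $T \in K$, in densities $(\lambda_v)_{v \in T}$ ranging over the standard simplex $\Delta_T = \{\lambda \in [0,1]^T : \sum \lambda_v = 1\}$, with the remaining positions filled by a zero-density "junk" pattern and by filler. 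Concretely, on block $[a,b)$ of length $\ell = b-a$ one writes $\lfloor \lambda_v \ell \rfloor$ copies of $p_v$ (rounding absorbed into the junk), so the Besicovitch-relevant content of a point of $\tilde Y$ is exactly a point of $|K| = \bigcup_T \Delta_T$, i.e. the realization of $K$. This is the natural "sofic implementation of barycentric coordinates" and it is where Proposition~\ref{proposition:EmbeddingOfRN}-style encoding tricks and the average-function construction $A_Z$ are reused.

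Next I would define the two maps. The map $\phi : |K| \to \tilde Y$ sends a point with barycentric coordinates $\lambda$ (supported on a simplex $T$) to the configuration with the $p_v$ packed at densities $\lambda_v$ as above; B-continuity of $\phi$ follows from the same estimates used for $U'$ and $A_Z$, since perturbing $\lambda$ slightly changes block contents by a proportional amount. The map $\psi : \tilde Y \to |K|$ reads off, from a configuration $y$, the asymptotic frequency vector $(\mathrm{freq}_{p_v}(y))_{v}$; on $\tilde Y$ this vector lies on a simplex of $K$ and varies B-continuously because frequencies of a fixed finite word are B-continuous functions, and $\psi \circ \phi = \ID_{|K|}$ on the nose (up to the rounding junk, which has density $0$). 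The remaining task is $\phi \circ \psi \sim \ID_{\tilde Y}$: given $y \in \tilde Y$, the configuration $\phi(\psi(y))$ lies in $\tilde Z_y$ for whichever transitive (mixing) component $Z_y$ of $Y$ contains $y$ — here one uses that $Y$ was built so that the "$T$-stratum" of $Y$ is a single mixing component — and then Lemma~\ref{lemma:BHomotopy} immediately gives a B-homotopy between $\phi \circ \psi$ and $\ID_{\tilde Y}$.

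The main obstacle I anticipate is the bookkeeping that makes $\psi$ well-defined and continuous globally on $\tilde Y$, not just on each stratum: a configuration in $\tilde Y$ can exhibit words $p_v$ for $v$ ranging over a simplex $T$, but sequences of such configurations could a priori converge (in $d_B$) to something whose frequency vector sits on a face or on a different simplex, and one must check this face is again in $K$ — this is exactly where closure of $K$ under subsets is used, and it has to be matched by a closure property of the subshift $Y$ (so that $\tilde Y$ is genuinely $|K|$ and not something larger). Ensuring $Y$ is sofic — i.e. that the "densities range over $\Delta_T$ for $T \in K$" constraint is describable by a finite automaton up to the zero-density junk — is the other delicate point; the trick is that densities need not be controlled by the automaton at all (any concatenation of the $p_v$'s and fillers is allowed in $Z$), and the restriction to configurations whose support of $p_v$-frequencies is a simplex of $K$ is automatically satisfied once $Y$ is defined as an orbit closure of the right generating set, because only finitely many subsets $T$ occur and each contributes a sofic piece. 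I would double-check that unboundedness of block lengths in $\mathcal{N}$ does not let extra limit points creep in, arguing as in the proof of Theorem~\ref{theorem:SoficClosed} that the junk stays zero-density.
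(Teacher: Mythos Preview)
Your construction of $Y$ ultimately agrees with the paper's: once you (correctly) abandon the $\mathcal{N}$-block description and let $Y_T$ be \emph{all} concatenations of $\{p_v : v\in T\}$ together with a filler word, $Y=\bigcup_{T\in K}Y_T$ is a finite union of mixing SFTs, exactly as in the paper. Your map $\phi$ is also close in spirit to the paper's $f$, though the paper builds $f$ by induction on skeleta via the average functions $A_{Y_S}$ rather than by a direct block-encoding.

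The genuine gap is the map $\psi$. Once $Y$ is sofic (no density constraint), the all-filler configuration lies in every $Y_T$, and there every $\mathrm{freq}_{p_v}$ vanishes; your normalized frequency vector is undefined at that point, and near it the denominator $\sum_v \alpha_v$ can be arbitrarily small, so $\psi$ cannot be made continuous there. More generally, $\limsup$-frequencies are not barycentric coordinates: for $y\in Y_{\{v_1,v_2\}}$ with oscillating block structure one can have $\alpha_{v_1}(y)+\alpha_{v_2}(y)$ strictly larger than the total available density, so the ``read off the barycentric coordinates'' picture is not what $\psi$ actually computes. Normalization still lands in the right simplex in that case, but the filler problem is fatal as stated.

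The paper sidesteps this entirely: its map $g:\tilde Y\to K$ does not use frequencies. Instead, for each simplex $S$ it defines sofic \emph{projections} $\pi^S_T:Y_S\to Y_T$ onto each face $T\subset S$ (replace the non-$T$ content by a fixed anchor point, deterministically glued), and then sets $g(x)$ on $Y_S\setminus\bigcup_i Y_{S_i}$ to be the inverse-distance-weighted average $W\bigl((g(\pi^S_{S_i}(x)))_i,(d_B(x,\pi^S_{S_i}(x)))_i\bigr)$, inducting on $\dim S$. This is well-defined and continuous everywhere, including at the all-filler point, and automatically lands in the correct face. To repair your argument you should either adopt this inductive projection/weighted-average definition of $\psi$, or redesign $Y$ so that no shared filler exists (for instance two words of coprime lengths per vertex and no common word among the $Y_{\{v\}}$), and then re-check mixing of each $Y_T$ and disjointness of the $\tilde Y_{\{v\}}$.
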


\begin{proof}
By Lemma~\ref{lemma:SFTInSofic}, we may assume without loss of generality that $X$ is a mixing SFT. Let $K$ have $n$ vertices, and choose a realization for $K$ in some Euclidean space. Since block maps are continuous in the Besicovitch topology, we may recode $X$ into an edge shift. Let $w \sqsubset X$ be an unbordered word, and let $v, u_1, \ldots, u_n$ be distinct words that do not contain $w$ such that for some $k$, $w u_i w \sqsubset X$, $w v w \sqsubset X$, $|u_i| = k$ and $|v| = k + 1$. It is easy to see that such words always exist. Denote $U = \{u_1,\ldots,u_n\}$. We label the vertices of $K$ with $U$, and for all $S \in K$, we let $Y_S$ be the subshift of $X$ whose points are concatenations of $wv$ and the $wu$ for $u \in S$. Denote also $Y = \bigcup_{S \in K} Y_S$.

We define two continuous functions which form a homotopy equivalence between $K$ and $\tilde Y$. First, $f : K \to Y$ is defined using the average functions $A_{Y_S}$. We define it first on the $0$-skeleton of $K$ by $f(u) = (wu) \INF$ for all $u \in U$, and proceed by induction, assuming $f$ is already defined on the $(k - 1)$-skeleton. For each $k$-simplex $S$ with barycenter $z$, we choose $f(z) \in Y_S$ arbitrarily. Now, let $x \neq z$ be in the interior of $S$, and draw a line segment starting from $z$, going through $x$ and ending at the boundary of $S$. Let $y$ be the endpoint of the segment, and define
\[ f(x) = A_{Y_S}\left(\frac{d(z, x)}{d(z, y)}, f(z), f(y)\right). \]
Since the restriction of $f$ to the boundary of $S$ is continuous by the induction hypothesis, it is easy to see that the newly defined $f$ is continuous on the boundary. Clearly $f$ is also continuous at $z$, and it is continuous at other interior points of $S$ by the fact that $A_{Y_M}$ is continuous.

For the function $g : Y \to K$ we need the auxiliary concept of \emph{projection} from a shift $Y_S$ to one of its subshifts $Y_T$, where $T \subset S$. For this, choose an \emph{anchor point} $y \in Y_T$. The projection $\pi^S_T : Y_S \to Y_T$ is defined for a point $x \in Y_S$ by first letting $R = \{i \in \N \;|\; x_{[i - 2m, i + 2m]} \sqsubset Y_T\}$, and defining
\[ \pi^S_T(x)_i = \left\{\begin{array}{ll}
x_i, & \mbox{if } [i - m, i + m] \cap R \neq \emptyset \\
y_i, & \mbox{if } [i - 2m, i + 2m] \cap R = \emptyset
\end{array}\right.\]
and deterministically filling the rest (which can be done, since $m$ is a mixing distance for $Y_T$). It is easy to see that $\pi^S_T$ is continuous, and that $d_B(x,\pi^S_T(x))$ approaches $0$ if $d_B(x,Y_T)$ does.

Again, we proceed by induction on the dimension of the simplex. Define $g(y) = u$ for all $y \in Y_u$ and $u \in U$, and assume that $g$ has been defined continuously on the union of all $Y_S$ with $S$ in the $(k-1)$-skeleton of $K$. Let $S$ be an arbitrary $k$-dimensional simplex, and let $S_i$ for $i \in [1,k]$ be its $(k - 1)$-skeleton. Denote $\pi_i = \pi_{S_i}$ for all $i$. Let $x \in Y_S - \bigcup_{i=1}^k Y_{S_i}$, and denote $g_i = g \circ \pi_i$ and $d_i(x) = d_B(x,\pi_i(x))$ for all $i$. Note that $d_i(x) > 0$ for all $i$ by Theorem~\ref{theorem:SoficClosed}. We then extend $g$ to $Y_S$ by defining
\[ g(x) = W \! \left( (g_i(x))_{i=1}^k, (d_i(x))_{i=1}^c \right). \]
Now $g$ is continuous on the interior of $Y_S$, since the $\pi_i$, $d_B$ and $W$ are, and the restriction of $g$ to the $Y_{S_i}$ is by the induction hypothesis. On the $Y_{S_i}$, the extended $g$ is continuous, since the inverted average $W$ will approach the value $g_i(x)$ as $x$ approaches the subshift $Y_{S_i}$.

Now $g \circ f : K \to K$ and the continuously extended version of $f \circ g : Y \to Y$ are homotopic to the respective identity maps by Lemma~\ref{lemma:LiesImplyHomotopy} and Lemma~\ref{lemma:BHomotopy}. \qed
\end{proof}

\begin{corollary}
Every finitely generated group with a finite set of defining relations can be implemented as the fundamental group of a sofic subshift of any mixing SFT in the Besicovitch topology.
\end{corollary}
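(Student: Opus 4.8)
The plan is to reduce the statement to Theorem~\ref{theorem:SimplicialComplexes} via the classical fact that finitely presented groups are realized as fundamental groups of finite simplicial complexes.

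First I would recall the \emph{presentation complex}. Given $G = \langle s_1, \ldots, s_n \mid r_1, \ldots, r_\ell \rangle$, one forms the $2$-dimensional CW complex $P$ with a single $0$-cell, $n$ $1$-cells giving a wedge of $n$ circles, and $\ell$ $2$-cells glued along the loops spelled by the words $r_j$ in the generators; the Seifert--van Kampen theorem then yields $\pi_1(P) \cong G$. Since $P$ is a finite $2$-complex it is triangulable, so there is a finite simplicial complex whose realization is homeomorphic to $P$, and passing to its underlying combinatorial data we obtain an abstract simplicial complex $K$ with finite vertex set whose realization $|K|$ satisfies $\pi_1(|K|) \cong G$. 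I would cite a standard algebraic topology reference for these facts; this is essentially the input already alluded to before Theorem~\ref{theorem:SimplicialComplexes} (cf.\ \cite{Ma96}).

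Then I would apply Theorem~\ref{theorem:SimplicialComplexes} with this $K$ and the given mixing SFT $X$, which is a mixing sofic shift of positive entropy as soon as it is nontrivial. The theorem provides a sofic subshift $Y \subseteq X$ whose G-projection $\tilde Y$, viewed in the Besicovitch space, is B-homotopy equivalent to $|K|$. Because the fundamental group is a homotopy invariant — and homotopy equivalence preserves path-components, so the choice of basepoint is immaterial, $|K|$ being connected — we conclude $\pi_1(\tilde Y) \cong \pi_1(|K|) \cong G$, which is the assertion of the corollary.

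There is no real obstacle beyond what has already been proved: the only external ingredient is the triangulability of the presentation complex, which is entirely standard, and all of the topological content is carried by Theorem~\ref{theorem:SimplicialComplexes}. The one caveat worth recording is that the corollary is vacuous for a degenerate (one-point) mixing SFT, so ``mixing SFT'' should be read as ``mixing SFT of positive entropy''.
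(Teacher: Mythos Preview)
Your proposal is correct and matches the paper's intended argument: the corollary is stated without proof precisely because it follows at once from Theorem~\ref{theorem:SimplicialComplexes} together with the classical fact (already invoked earlier in the section, citing \cite{Ma96}) that every finitely presented group is the fundamental group of a finite simplicial complex, plus the homotopy invariance of $\pi_1$. Your remark that a nontrivial mixing SFT automatically has positive entropy, so that the hypothesis of Theorem~\ref{theorem:SimplicialComplexes} is met, is the only point worth making explicit.
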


We then prove a converse for Theorem~\ref{theorem:SimplicialComplexes}.

\begin{theorem}
Let $X$ be a sofic shift. Then there exists a simplicial complex $K$ which is B-homotopy equivalent to $\tilde X$.
\end{theorem}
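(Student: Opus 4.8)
The plan is to exhibit $\tilde X$, up to B-homotopy equivalence, as an order complex built from a finite poset of B-projections of mixing sofic subshifts of $X$, and to construct the homotopy equivalence by the same average-function-and-projection technique as in the proof of Theorem~\ref{theorem:SimplicialComplexes}. Following the conventions preceding Section~\ref{sec:Projections}, I would first reduce to a one-way sofic shift $X$ all of whose transitive components $X_1,\dots,X_n$ are mixing, so that $\tilde X=\bigcup_{i=1}^n \tilde X_i$, each $\tilde X_i$ being B-closed by Theorem~\ref{theorem:SoficClosed} and B-contractible by Corollary~\ref{corollary:BSoficCont}.

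The technical core --- and the step I expect to be the main obstacle --- is a \emph{key lemma} stating that the B-projections of sofic shifts are closed under finite intersection: for sofic shifts $Y,Z$ one has $\tilde Y\cap\tilde Z=\widetilde{Y\cap Z}$. The inclusion $\widetilde{Y\cap Z}\subseteq\tilde Y\cap\tilde Z$ is immediate. For the converse one takes $y\in\tilde Y\cap\tilde Z$; then for every $\epsilon>0$ there is an $N$ such that, outside a set of positions of density at most $\epsilon$, the $N$-window of $y$ lies in $L(Y)\cap L(Z)$, and one corrects $y$ off a density-zero set to a point of $Y\cap Z$ by the stitching construction used in the proof of Theorem~\ref{theorem:SoficClosed}. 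The subtlety, which is where the real work lies, is that a long word of $L(Y)\cap L(Z)$ need not lie in $L(Y\cap Z)$, so one must show that B-closeness to both $Y$ and $Z$ forces the density of such `bad' windows to zero. Granting this lemma, every intersection $\tilde X_S=\bigcap_{i\in S}\tilde X_i$ equals $\widetilde{\bigcap_{i\in S}X_i}$; decomposing each $\bigcap_{i\in S}X_i$ into mixing components and closing the arising family of mixing sofic subshifts of $X$ under the operation ``intersect two members and take a mixing component'' yields a family $\mathcal V$ --- which one must also check is finite --- of mixing sofic subshifts such that $\tilde X=\bigcup_{V\in\mathcal V}\tilde V$, each $\tilde V$ is B-closed and B-contractible, and $\tilde V\cap\tilde V'$ is a union of members of $\{\tilde V'':V''\in\mathcal V\}$ for all $V,V'\in\mathcal V$.

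Let $P=(\{\tilde V:V\in\mathcal V\},\subseteq)$ and let $K$ be its order complex, whose simplices are the chains $\tilde V_0\subsetneq\cdots\subsetneq\tilde V_k$ in $P$. I would build the two halves of the homotopy equivalence exactly as in the proof of Theorem~\ref{theorem:SimplicialComplexes}. The map $f:K\to\tilde X$ is defined by induction on the dimension of a chain-simplex $c$: choose a point of the largest element $\tilde V_k$ of $c$ for its barycenter, and extend radially into the interior of $c$ using the average function of $V_k$, so that $f$ carries $c$ into $\tilde V_k$ and every face of $c$ into the corresponding (smaller or equal) element of $P$. The map $g:\tilde X\to K$ is defined by induction ``from below'' on $P$: it sends each $\tilde V$ minimal in $P$ to the corresponding vertex of $K$, and extends over a larger $\tilde V$ by using the projections onto the maximal members of $P$ properly contained in $\tilde V$, together with the inverse-weighted average $W(\cdot,\cdot)$ against the distances $d_B(\cdot,\tilde V')$, which are positive by Theorem~\ref{theorem:SoficClosed}; this arranges that $g$ maps $\tilde V$ into the subcomplex of $K$ spanned by the chains lying at or below $\tilde V$, and in particular $g$ maps $\tilde X_i$ into the subcomplex spanned by chains below $\tilde X_i$. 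Consequently $g\circ f$ carries every simplex of $K$ into itself, so $g\circ f$ is B-homotopic to $\ID_K$ by Lemma~\ref{lemma:LiesImplyHomotopy}; and for $y\in\tilde X$ one picks $i$ with $y\in\tilde X_i$, notes that $f(g(y))$ then lies in some $\tilde V\subseteq\tilde X_i$, and concludes that $f\circ g$ is B-homotopic to $\ID_{\tilde X}$ by Lemma~\ref{lemma:BHomotopy} applied with the transitive component $X_i$. The verification that $f$ and $g$ are well defined and B-continuous is routine and parallel to that in Theorem~\ref{theorem:SimplicialComplexes}.
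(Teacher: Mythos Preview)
Your approach is essentially the paper's: build a finite simplicial complex from the intersection combinatorics of the transitive components and their B-projections, then realise the homotopy equivalence with the average functions $A$ in one direction and inverse-distance-weighted projections $W$ in the other, finishing with Lemma~\ref{lemma:LiesImplyHomotopy} and Lemma~\ref{lemma:BHomotopy}.

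There are two genuine differences worth noting. First, you make the identity $\tilde Y\cap\tilde Z=\widetilde{Y\cap Z}$ (and the finiteness of the closure $\mathcal V$ under ``intersect and take a mixing component'') the technical core. The paper sidesteps this: it works directly with the pairs $(Y_R,Z_R)$ where $Y_R=\bigcap R$ and $Z_R=\bigcap_{Y\in R}\tilde Y$, ordered by inclusion in $Y_R$, and never needs to know that $Z_R$ equals $\widetilde{Y_R}$ --- only the much weaker statement that $Z_R=\emptyset$ iff $Y_R=\emptyset$. So the paper's bookkeeping is heavier (pairs rather than a single family), but it buys freedom from your key lemma, which you yourself flag as the hard step. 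Second, the complexes differ: the paper takes as vertices the \emph{minimal} elements of the poset $I$ and as simplices those subsets admitting a supremum in $I$, whereas you take the order complex (chains) of your poset $P$. Both are legitimate nerve-type models for the same cover and lead to the same verification via Lemmas~\ref{lemma:LiesImplyHomotopy} and~\ref{lemma:BHomotopy}; the paper's choice meshes more directly with its labelling of simplices by elements of $I$.

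In short: same architecture and same endgame; the paper trades your clean intersection lemma for a coarser but cheaper description of the pieces.
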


\begin{proof}
Let $\mathcal{T}$ be the set of transitive components of $X$. For all $R \subset \mathcal{T}$, we define the \emph{intersection set} $Y_R = \bigcap R$ and the \emph{B-intersection set} $Z_R = \bigcap_{Y \in R} \tilde Y$. Note that each $Y_R$ is either a sofic shift or the empty set, and $Z_R$ is empty if and only if $Y_R$ is (although, in general, these sets look very different). We let $I$ be the set of pairs $(Y_R,Z_R)$ such that $Y_R \neq \emptyset$ ordered by inclusion in the first coordinate.

For each $P = (Y,Z) \in I$, we choose a point $x_P \in Z$. Let $V$ be the set of minimal elements of $I$. We define an abstract simplicial complex $K$ by
\[ K = \{ J \subset V \;|\; \sup J \mbox{ exists in } I \}.\]
Label each simplex $J$ of $K$ with $\sup J \in I$. We now prove that $X$ and $K$ are homotopy equivalent by explicitly constructing two continuous functions $f : K \to X$ and $g : X \to K$ that form the equivalence.

As in Theorem~\ref{theorem:SimplicialComplexes}, $f$ is defined by induction on $k$ on the $k$-skeletons of $K$. For all vertices $P \in V$, $f$ simply maps the corresponding point in $K$ to $x_P$. On simplexes $S \subset K$ of higher dimension, $f$ is defined as in Theorem~\ref{theorem:SimplicialComplexes}, but mapping the barycenter to $x_P$ for some $P < S$. The averaging can be done, since any two points $x_P, x_Q$ with $P,Q < S$ must be in a common transitive component, and $f$ is continuous as in Theorem~\ref{theorem:SimplicialComplexes}. If $S$ has label $(Y,Z) \in I$, then $f(S) \subset Z$, since B-intersection sets are closed under the average functions $A$.

To each simplex $S$ with label $P = (Y,Z)$, we define a function $g_P : X \to S$ with the property that $g_P(Z') \subset T$ for all simplexes $T \subset S$ with label $(Y',Z')$. If $P \in V$, then $g_P$ is simply the constant map from $X$ to the vertex $P$. Suppose then that $P \notin V$, and let $\mathcal{Q} = \{ Q \in I \;|\; Q < P \}$. If $x \in Z'$ for some $Q = (Y',Z') \in \mathcal{Q}$, define $g_P(x) = g_Q(x)$. Otherwise, denote $h_P(x) = (g_Q(x))_{Q \in \mathcal{Q}}$ and $d_P(x) = (d(x, Z))_{(Y,Z) \in \mathcal{Q}}$, and define
\[ g_P(x) = W(h_P(x), d_P(x)). \]
Since every $x \in X$ lies in some B-intersection set $Z$, we may define $g(x) = g_{(Y,Z)}(x)$ for the pair $(Y,Z) \in I$, and this value does not depend on the choice of $Z$. It is easy to see that $g$ is continuous, and that $g_{(Y,Z)}(Z) \subset S$ holds for all simplexes $S$ with label $(Y,Z)$.

We claim that the functions $f$ and $g$ form a homotopy equivalence between $\tilde X$ and $K$. Namely, if a simplex $S \in K$ corresponds to a transitive component $T \in \mathcal{T}$, then $f(g(T)) \subset \tilde T$ and $g(f(S)) \subset S$. Thus $f \circ g$ and $g \circ f$ satisfy the requirements of Lemma~\ref{lemma:BHomotopy} and Lemma~\ref{lemma:LiesImplyHomotopy}, and the claim follows. \qed
\end{proof}

We believe that the above results hold also in the Weyl topology, but with slightly more complicated proofs. Namely, the averaging functions $A_X$ have to be defined using $U$ instead of $U'$, and the dissection of $X$ to the $Y_R$ and $Z_R$ has to be more intricate.

It is easy to see that not all subshifts are homotopy equivalent to a simplicial complex:

\begin{example}
Let $X$ be the orbit closure of $\{{}^\infty(10^n)^\infty \;|\; n \in \N\}$. Clearly, $\tilde X$ has a countably infinite number of G-path-connected components, so it cannot be G-homotopy equivalent to any simplicial complex.
\end{example}

Unlike the previous example and the easily proven fact that zero-entropy sofic shifts always occupy finitely many $\sim$-classes, zero-entropy subshifts can have rather complicated G-structure:

\begin{example}
\label{example:WeirdZeroEntropy}
Let $X$ be the C-orbit closure of $U'([0,1])$, a zero-entropy subshift, and denote by $Y$ the projection of $X$ to the quotient topology defined by $\sim_B$. Every point of $Y$ is a fixed point of $\sigma$, so $U'$ induces a homeomorphism between $[0,1]$ and the projection of $U'([0,1])$. By adding the projections of the points $\INF 0 \INF, \INF 1 \INF, \INF 01 \INF$ and $\INF 10 \INF$ as necessary, every subset of $[0,1]$ is homeomorphic via $U'$ to the projection of a subshift of $X$ modulo at most $4$ isolated points.
\end{example}

Using the above, we present an unrelated solution to an open problem of \cite{CaFoMaMa97}.

\begin{theorem}
There are nontrivial shift-invariant B-Borel measures on $\Sigma^\Z$.
\end{theorem}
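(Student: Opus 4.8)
The plan is to push forward the Lebesgue measure of $[0,1]$ along the path $T'$ of Definition~\ref{def:BPath}. Fixing two distinct symbols $0,1 \in \Sigma$, the map $T' : [0,1] \to \{0,1\}^\Z \subseteq \Sigma^\Z$ is B-continuous (as noted after Definition~\ref{def:BPath}), hence B-Borel measurable, so $\mu = T'_* \lambda$, with $\lambda$ the Lebesgue measure on $[0,1]$, is a Borel probability measure on the Besicovitch space $\Sigma^\Z$. I would show that $\mu$ is shift-invariant and nonatomic, which proves the theorem, nontriviality following from nonatomicity.

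First I would record the structural fact that makes everything work: since $d_B$ is a pseudometric, two $\sim_B$-equivalent points lie in exactly the same B-open sets, so the collection of unions of $\sim_B$-classes is a $\sigma$-algebra containing every B-open set; hence every B-Borel set is a union of $\sim_B$-classes. Shift-invariance of $\mu$ is then nearly automatic: for a B-Borel set $A$ we have
\[ \mu(\sigma^{-1}A) = \lambda\{x \in [0,1] : \sigma(T'(x)) \in A\}. \]
By Example~\ref{example:WeirdZeroEntropy}, every point of the $\sim_B$-quotient of the C-orbit closure of $U'([0,1])$ is a $\sigma$-fixed point, so $\sigma(T'(x)) \sim_B T'(x)$ for all $x$; since $A$ is a union of $\sim_B$-classes, $\sigma(T'(x)) \in A$ iff $T'(x) \in A$, and therefore $\mu(\sigma^{-1}A) = \lambda\{x : T'(x) \in A\} = \mu(A)$.

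For nonatomicity, the relevant atoms are $\sim_B$-classes of positive measure. Each class $\{y : y \sim_B T'(x)\} = \bigcap_{n \geq 1} B_{1/n}(T'(x))$ is B-Borel, and by Example~\ref{example:WeirdZeroEntropy} the assignment $x \mapsto [T'(x)]_{\sim_B}$ is injective (it is essentially the homeomorphism that $U'$ induces between $[0,1]$ and the projection of $U'([0,1])$), whence $\mu(\{y : y \sim_B T'(x)\}) = \lambda\{x' : T'(x') \sim_B T'(x)\} = \lambda\{x\} = 0$. Thus $\mu$ is a nonatomic shift-invariant B-Borel probability measure, as desired.

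The two load-bearing inputs are both supplied by Example~\ref{example:WeirdZeroEntropy}: that $\sigma$ acts as the identity on the $\sim_B$-quotient of the orbit closure of $U'([0,1])$ — which is what collapses the shift-invariance computation to the structural remark on the Borel $\sigma$-algebra — and that $T'$ is injective modulo $\sim_B$. Both hold because each $U'(x)$ consists of ever-longer blocks of constant $0$-density $x$, so shifting it alters only a density-zero set of coordinates, while distinct parameters yield configurations of different densities; the only mild care needed is with the degenerate parameters $0$ and $1$. Finally, one should note, as in \cite{CaFoMaMa97}, that no Borel measure on $\Sigma^\Z$ can have full support: the Besicovitch space is non-separable, so it carries an uncountable $c$-separated set for some $c > 0$, and the corresponding pairwise-disjoint balls $B_{c/2}(x)$ cannot all have positive measure.
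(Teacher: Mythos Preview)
Your proposal is correct and is essentially the paper's own argument, just written out in detail: the paper's proof is the single sentence ``The map in Example~\ref{example:WeirdZeroEntropy} can be used to transport measures on $[0,1]$ to those on the zero-entropy subshift,'' and you have spelled out exactly this transport via $T'_*\lambda$, using the two facts recorded in that example (shift-triviality on the $\sim_B$-quotient and injectivity of $U'$ modulo $\sim_B$) to obtain shift-invariance and nonatomicity. The only cosmetic difference is that you work with the two-sided $T'$ rather than $U'$, matching the statement's $\Sigma^\Z$.
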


\begin{proof}
The map in Example~\ref{example:WeirdZeroEntropy} can be used to transport measures on $[0, 1]$ to those on the zero-entropy subshift. \qed
\end{proof}

\section{Projections into a Subset of the Full Shift}
\label{sec:Projections}

The proof of Theorem~\ref{theorem:SimplicialComplexes} uses the interesting idea of projecting points of a shift space onto a smaller subshift, but the projection used was rather arbitrary. In this section, we show that projections from the full shift cannot be made canonical in a natural sense for any subshift.

\begin{definition}
A subset $X$ of $\Sigma^\Z$ is said to have the \emph{unique approximation property} if for every $y \in \Sigma^\Z$ the set $\{ x \in X \;|\; d_G(y,x) = d_G(y,X) \}$ consists of a single $\sim$-equivalence class, denoted $\pi_X(y)$.
\end{definition}

Obviously a set with the unique approximation property needs to be G-closed.

\begin{lemma}
\label{lemma:ALemma}
Let a subshift $X$ have the unique approximation property. Then, for each $x \in \Sigma^\Z$ with period $p$, the class $\pi_X(x)$ contains a $p$-periodic point of $X$.
\end{lemma}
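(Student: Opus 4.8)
The plan is to exploit the shift-invariance of the global pseudometrics together with an averaging argument. Let $x \in \Sigma^\Z$ have period $p$, and pick any $y \in \pi_X(x)$, so that $d_G(x,y) = d_G(x,X) =: \delta$. Since $\sigma$ is a G-isometry and $\sigma^p(x) = x$, the point $\sigma^{p}(y)$ also realizes the distance: $d_G(x, \sigma^p(y)) = d_G(\sigma^p(x), \sigma^p(y)) = d_G(x,y) = \delta$, and $\sigma^p(y) \in X$ because $X$ is a subshift. By the unique approximation property, $\sigma^p(y) \sim_G y$. Thus $y$ is ``$\sim_G$-periodic'' with period $p$, but this is not yet a genuinely $p$-periodic \emph{point}; I must produce an honest periodic point in the same class.

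The key step is to average the orbit segment $y, \sigma^p(y), \sigma^{2p}(y), \ldots$ to obtain a $p$-periodic point. Concretely, I would define a point $z \in \Sigma^\Z$ on each residue class $j + p\Z$ by letting $z_{j + kp}$ be the most frequent symbol appearing among $\{y_{j + kp + np} : n \in \N\}$ (taking densities, with ties broken arbitrarily); this makes $z$ literally $p$-periodic. The heart of the argument is then to check two things: first, that $z \in X$ — here one uses that $X$ is a subshift and that $z$ can be realized as a limit, in the Cantor topology, of shifts $\sigma^{n_k p}(y)$ along a suitable subsequence (by a pigeonhole/diagonal argument over the finitely many $p$-windows, a suitable subsequence of $(\sigma^{np}(y))_n$ converges in the Cantor topology to a $p$-periodic point, and that point is forced to be exactly $z$ if the frequencies are chosen to be the limiting ones); second, that $d_G(x,z) = \delta$, so that $z \in \pi_X(x)$. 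For the latter, since $z$ is a Cantor-limit of points $\sigma^{n_k p}(y)$, each at G-distance $\delta$ from $x$, and since $x$ is $p$-periodic, the local Hamming-frequency of disagreement between $x$ and $z$ on any long window is squeezed between the corresponding frequencies for the $\sigma^{n_k p}(y)$; a limsup estimate then gives $d_G(x,z) \le \delta$, and $\ge \delta$ is automatic since $z \in X$. Therefore $z \in \pi_X(x)$ is the desired $p$-periodic point.

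The main obstacle I anticipate is the interplay between Cantor-convergence and the \emph{limsup} in the definitions of $d_B$ and $d_W$: a Cantor-limit of points at distance $\delta$ need not itself be at distance $\le \delta$ in general, so I cannot simply pass to the limit. The fix is that the approximating points here are not arbitrary — they are \emph{shifts of a single point $y$ along the period $p$} — so the disagreement pattern between $x$ and $\sigma^{np}(y)$ is, window by window, a reshuffling of a fixed pattern (the disagreement between $x$ and $y$), and averaging these along a Følner-type sequence of windows genuinely controls the limsup. Making this precise for the Weyl metric (where one also maximizes over the window position $m$) is the technically delicate point, and I would handle it by fixing, for each scale $n$, the worst window for $z$ and comparing it to nearby windows for the $\sigma^{n_kp}(y)$, using that $p$ is fixed while $n \to \infty$.
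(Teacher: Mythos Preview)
Your opening step---using that $\sigma$ is a $G$-isometry and $\sigma^p(x)=x$ to conclude $\sigma^p(y)\sim_G y$---is exactly how the paper begins. After that, however, you take a substantially more complicated route than necessary, and the detour introduces real gaps.

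The majority-vote construction is the main problem. First, a coordinate-wise majority vote on residue classes need not produce a point of $X$: the symbols you pick independently on the $p$ residue classes may well assemble into a word forbidden in $X$. You try to repair this by asserting that the majority-vote point $z$ coincides with a Cantor limit of $(\sigma^{n_kp}(y))_k$, but these are two genuinely different constructions and there is no reason they should agree. A Cantor accumulation point of that sequence reflects what $y$ looks like \emph{locally} near the chosen $n_kp$, whereas your $z$ is a global statistic; they can differ even when all the frequencies exist. So as written, neither ``$z\in X$'' nor ``$z$ is the Cantor limit'' is justified.

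The paper avoids all of this with a one-line observation. From $\sigma^p(y)\sim_G y$ the set $\{i:y_i\neq y_{i+p}\}$ has zero density, hence arbitrarily long gaps; on each such gap $y$ is literally $p$-periodic, and by pigeonhole over $\Sigma^p$ some fixed $w$ occurs as $w^n\sqsubset y$ for arbitrarily large $n$. This immediately gives ${}^\infty w^\infty\in X$ (closure under Cantor limits), with no majority vote and no diagonal argument. Your ``pigeonhole over the finitely many $p$-windows'' is essentially this same idea, so if you drop the majority-vote layer you are back on the paper's track.

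For the distance step, note that an \emph{arbitrary} such $w$ need not satisfy $d_B(x,{}^\infty w^\infty)\le\delta$: in the Besicovitch case the long $w$-runs might sit far from the origin, and your ``squeeze via Cantor convergence'' does not bound a centred $\limsup$. What makes the paper's assertion $d_G(x,{}^\infty w^\infty)=d_G(x,X)$ work is that one can \emph{choose} $w$ well: the words $w$ admitting arbitrarily long runs fill a density-$1$ set of $p$-blocks of $y$, and since the average Hamming cost of those blocks against $x_{[0,p-1]}$ is at most $\delta$, some such $w$ has $H(x_{[0,p-1]},w)/p\le\delta$. Your fix via ``F{\o}lner-type averaging'' is gesturing at this, but the argument has to select $w$, not just pass to a limit along an uncontrolled subsequence.
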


\begin{proof}
Let $y \in \pi_X(x)$, and note that necessarily $\sigma^p(y) \sim y$, since $\sigma$ is an isometry for $d_G$. Then there exists a word $w \in \Sigma^p$ such that $w^n \sqsubset y$ for arbitrarily large $n$, and $d_G(x,\INF w \INF) = d_G(x,X)$. But now $\INF w \INF \in \pi_X(x) \cap X$, proving the claim. \qed
\end{proof}

In particular, the above lemma implies that if there exist two $\sim$-distinct points in $X$ whose distance to $x$ is $p^{-1}$, then $x \in X$.

\begin{theorem}
Let $X$ be a nonempty subshift with the unique approximation property, and assume that $\tilde X$ consists of at least two $\sim$-classes. Then $X = \Sigma^\Z$.
\end{theorem}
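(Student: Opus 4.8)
# Proof Proposal

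The plan is to combine Lemma~\ref{lemma:ALemma} with a density/approximation argument using periodic points. The key observation from the remark following Lemma~\ref{lemma:ALemma} is that if any configuration $x$ has two $\sim$-distinct points in $X$ at distance exactly $p^{-1}$ for some period $p$ of $x$, then $x \in X$. So the strategy is: take an arbitrary $x \in \Sigma^\Z$, and show it must lie in $X$. Since $X$ is a subshift (hence $G$-closed by the remark after the definition, and $\tilde X$ is $G$-closed when $X$ is sofic — but here we only know $X$ is a subshift, so I should be careful and use that $X$ has the unique approximation property, which forces $G$-closedness directly), it suffices to show every periodic point lies in $X$, and then invoke $G$-density of periodic points in $\Sigma^\Z$ together with $G$-closedness of $X$.

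First I would establish that $X$ contains two $\sim$-distinct points. By hypothesis $\tilde X$ has at least two $\sim$-classes, so pick $a, b \in X$ with $a \not\sim b$; then $d_G(a,b) = \delta > 0$. Next, the main step: fix an arbitrary $p$-periodic point $x = \INF w \INF$ with $w \in \Sigma^p$, and I want to produce two $\sim$-distinct points of $X$ both at distance exactly $p^{-1}$ from $x$ — or more robustly, at the common minimal distance $d_G(x, X)$ with that distance realized by $\sim$-distinct classes. The idea is to take the two points $a, b \in X$ and "interleave" or "sparsify" them against the periodic background $\INF w \INF$ so that they become close to $x$. Concretely, using mixing-type constructions as in the earlier sections is not available since $X$ is an arbitrary subshift, so instead I would argue more softly: by the unique approximation property, $\pi_X(x)$ is a single $\sim$-class; if $d_G(x,X) = 0$ then $x \in \tilde X$, and since $x$ is $p$-periodic, Lemma~\ref{lemma:ALemma} (or a direct argument) puts a $p$-periodic point of $X$ in the class of $x$, and as $x$ itself is the only $p$-periodic point $\sim$-equivalent to itself up to shift, we get $x \in X$. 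So the real work is ruling out $d_G(x, X) > 0$.

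To rule out $d_G(x,X) = r > 0$: I would take a point $y \in \pi_X(x) \cap X$ which by Lemma~\ref{lemma:ALemma} can be chosen $p$-periodic, say $y = \INF u \INF$ with $u \in \Sigma^p$. Now build a new configuration $z$ from $y$ by replacing a positive-density but "sparse enough" set of the $u$-blocks with $w$-blocks; since $X$ is shift-invariant and contains $\INF u \INF$, I need $z \in X$ — this requires that $w$ actually occurs appropriately in $X$, which is the sticky point. The cleaner route: consider instead the point $x$ and note that for the set $S = \{x' \in X : d_G(x,x') = r\}$, shift-invariance of $d_G$ and $r$ forces $\sigma^p(y) \sim y$; moreover I can try to contradict uniqueness by exhibiting a second realizing class. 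Take $y = \INF u\INF \in \pi_X(x)$, $p$-periodic; then $\sigma^j(y)$ for various $j$ are all in $\pi_X(x)$ and all $\sim$-equivalent to $y$ (they are shifts of a periodic point). The contradiction should come from comparing the distances $d_G(x, \sigma^j(y))$ as $j$ ranges over $\Z/p$: since $x$ and $y$ are both $p$-periodic, $d_G(x, \sigma^j(y)) = H(w, \sigma^j u)/p$ and at least one shift minimizes this; if two distinct shifts $j \neq j'$ achieve the minimum $r$, then $\sigma^j(y)$ and $\sigma^{j'}(y)$ are two realizing points, but being shifts of the same periodic point they are $\sim$-equivalent, so this does not immediately contradict uniqueness. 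I therefore expect the genuine argument to instead perturb $a$ or $b$ (the two $\sim$-distinct witnesses) toward $x$: replace a zero-density collection of long windows of $a$ by the corresponding $\INF w\INF$-pattern to get $a' \in X$ (using that $X$ is closed under such zero-density surgery only when these windows already occur — again the obstruction), so more realistically one argues: $d_G(x,X)=r$ and uniqueness of $\pi_X(x)$ imply, via a convexity-style argument using the $G$-geodesic structure, that the single class $\pi_X(x)$ is $\sigma^p$-invariant and can be taken $p$-periodic, and then that $w$ must be $\sigma$-conjugate to $u$, forcing $r=0$.

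The main obstacle I anticipate is exactly this: without mixing or soficity, I cannot freely do the block-surgery constructions (interleaving, sparsification) that the earlier sections rely on, so the proof that every periodic point lies in $X$ must go through the abstract uniqueness property alone. The crux will be showing that two $\sim$-distinct points of $X$, combined with a given periodic point $x$, always produce two $\sim$-distinct nearest approximations to some auxiliary point unless $x \in X$ — i.e., leveraging the remark "if there exist two $\sim$-distinct points in $X$ whose distance to $x$ is $p^{-1}$, then $x \in X$" by scaling: find a point $x$ whose distance to $X$ is achieved by $\sim$-distinct classes. I would do this by considering a point $x$ that looks like $a$ on one half-line and like $b$ on the other; then shifting it far in either direction makes it $G$-close to $a$ or to $b$ respectively, and a continuity/intermediate-value argument along the shift orbit (or along a path $T'$ from Definition~\ref{def:BPath} connecting regimes) should locate a configuration equidistant, in two distinct classes, from $X$ — yielding $x \in X$, hence $X$ is all of $\Sigma^\Z$.
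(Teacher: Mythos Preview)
Your setup is right: reduce to showing that every periodic point lies in $X$, and exploit the remark after Lemma~\ref{lemma:ALemma} (two $\sim$-distinct points of $X$ at distance $p^{-1}$ from a $p$-periodic $x$ force $x\in X$). But none of the constructions you sketch actually closes the argument. The interleaving/sparsification ideas require surgery that an arbitrary subshift need not support, as you note yourself. The shift-orbit argument fails because the various $\sigma^j(y)$ are all $\sim$-equivalent, so they never witness two distinct approximating classes. And the final half-line/intermediate-value idea is doubly broken: the shift is a $G$-isometry, so sliding $x$ does not move it $G$-closer to $a$ or $b$; and even if you located some $z$ equidistant from $a$ and $b$, you would still need $d_G(z,X)=d_G(z,a)=d_G(z,b)$, which nothing in your outline provides.

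The missing idea is a purely combinatorial double induction that manufactures the two witnesses by single-symbol flips, never appealing to mixing or surgery. First one shows every unary point $\INF s\INF$ lies in $X$: by Lemma~\ref{lemma:ALemma} its nearest class contains a unary point, so if $\INF s\INF\notin X$ the distance is exactly $1$; but \emph{every} class of $\tilde X$ is at distance $\le 1$, so at least two classes realise the minimum, contradicting uniqueness. Fixing a symbol $0\in\Sigma$, one then shows by induction on $i$ that all $2^i$-periodic points lie in $X$, and inside that, by induction on the number of non-$0$ symbols of $w\in\Sigma^{2^i}$. For $w$ with a single non-$0$ symbol, write $w=0^{2^{i-1}}v$; then $\INF 0\INF$ and $\INF v\INF$ are in $X$ by the outer induction, are $\sim$-distinct, and both sit at distance $2^{-i}$ from $\INF w\INF$. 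For $w$ with $k\ge 2$ non-$0$ symbols, flipping two different non-$0$ symbols to $0$ yields two $\sim$-distinct $2^i$-periodic points already in $X$ by the inner induction, each at distance $2^{-i}$ from $\INF w\INF$. In every case the remark gives $\INF w\INF\in X$. Since $2^i$-periodic points are Cantor-dense and $X$ is Cantor-closed, $X=\Sigma^\Z$.
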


\begin{proof}
We will show, by induction on $i \in \N$, that $X$ contains all points with period $2^i$, from which the claim follows. Begin with a unary point $y \in \Sigma^\Z$, and note that Lemma~\ref{lemma:ALemma} implies the existence of a unary point $x \in \pi_X(y)$. If $y \notin X$, then by definition $d(y,X) = d(y,x) = 1$. But now, any other $\sim$-class of $\tilde X$ could be chosen as $\pi_X(y)$, a contradiction.

Choose a symbol $0 \in \Sigma$. For the induction step, we will prove by a new induction on the number of non-$0$ symbols in a word $w \in S^{2^i}$ that $\INF w \INF \in X$. First, let $w$ contain a single non-$0$ symbol. We may assume that $w = 0^{2^{i-1}}v$ for some $v \in \Sigma^{2^{i-1}}$. Now, $\INF v \INF$ and $\INF 0 \INF$ are $2^i$-periodic points of $X$ whose distance to $\INF w \INF$ is $2^{-i}$, so Lemma~\ref{lemma:ALemma} implies that $\INF w \INF \in X$. For an arbitrary $w$, we find two points of $X$ whose distance to $\INF w \INF$ is $2^{-i}$ by switching two different non-$0$ symbols in $w$ to $0$. Both of these points are in $X$ by the induction hypothesis, so Lemma~\ref{lemma:ALemma} again implies that $\INF w \INF \in X$. \qed
\end{proof}

\begin{example}
The set $\{{}^\infty\{00, 01\}.\{00, 01\}^\infty\}$ has the unique approximation property.
\end{example}

A straightforward argument also shows that if a point of the full shift has two best approximations in a mixing SFT, then it has an uncountable number of them.

\section{Geometry of Cellular Automata}
\label{sec:CA}

We start by proving that so-called contracting maps have a very simple structure in a full shift.

\begin{definition}
We say a CA $f$ defined on a subshift $X$ is \emph{contracting} if for all points $x, y \in X$ we have $d(x, y) \geq d(f(x), f(y))$, \emph{expanding} if $d(x, y) \leq d(f(x), f(y))$, and an \emph{isometry} if $d(x, y) = d(f(c), f(d))$.
\end{definition}

\begin{proposition}
\label{proposition:ContractingHasRadiusOne}
In the full shift, a CA whose global map is G-contracting must have a size $1$ neighborhood.
\end{proposition}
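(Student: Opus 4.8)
The plan is to argue by contradiction: suppose $f$ is a CA on $\Sigma^\Z$ with local rule of radius $r \geq 2$ (after centering the neighborhood) which is G-contracting, and exhibit two configurations $x,y$ with $d(x,y)$ small but $d(f(x),f(y))$ strictly larger. The key intuition is that if the local rule genuinely depends on a coordinate at distance $\geq 1$ from the center, a single symbol change in $x$ can flip more than one symbol in $f(x)$; iterating this over a sparse set of independent changes multiplies the density distance by a factor $>1$, violating contraction.

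First I would reduce to a clean form. If the local rule $F : \Sigma^{[-r,r]} \to \Sigma$ does not depend on the first or last coordinate, shrink the neighborhood; so assume it depends on coordinate $-r$ (the symmetric case is identical). Then there is a pattern $u \in \Sigma^{[-r,r]}$ and symbols $a \neq b$ such that $F(a u_{[-r+1,r]}) \neq F(b u_{[-r+1,r]})$. The next step is to build a background configuration in which a local switch at one coordinate forces a change at \emph{two} coordinates of the image, well separated from each other. Concretely, I would look for a finite word $W$ such that changing a single designated symbol of $W$ alters $F$-images at two positions of $W$ at distance $\geq 2r+1$ apart (so that the two ``disturbance zones'' of radius $r$ do not interact when $W$ is used as a tile). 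If the rule depends on coordinate $-r$ \emph{and} on coordinate $+r$, such a $W$ is immediate; if it depends only on one side, one instead uses the fact that non-dependence on all coordinates except the center contradicts $r \geq 2$, so after possibly reflecting we always land in a case where a single flip causes at least two image flips. Then let $x$ be the periodic point $\INF W \INF$ and $y = \INF W' \INF$ where $W'$ is $W$ with the designated symbol flipped: $d_G(x,y) = 1/|W|$ but $d_G(f(x),f(y)) \geq 2/|W|$, contradicting contraction. Using $U'$-style sparse insertions instead of full periodicity handles the Weyl case uniformly and keeps both distances as limsups of the expected ratios.

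The main obstacle I expect is the case analysis hidden in ``a single flip causes $\geq 2$ image flips'': a priori the rule could depend on coordinate $-r$ only through patterns that, in every realizable background, also get repaired elsewhere, or the two image flips might be forced to be adjacent and thus collapse into a single length-$(2r+1)$ disturbance that only costs a bounded additive amount per tile rather than a multiplicative one. I would handle this by being careful that the two flipped image positions are genuinely at distance $\geq 2r+1$ — chosen as the center of the neighborhood-window over the flipped input symbol versus the edge position $-r$ away — and by taking $|W| \to \infty$ so that any bounded additive discrepancy is dominated, while the multiplicative factor $\geq 2$ survives in the limsup. Once the separation is guaranteed, the density computation is routine and the contradiction with $d_G(f(x),f(y)) \leq d_G(x,y)$ is immediate.
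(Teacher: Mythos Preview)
Your overall plan --- find a single input-symbol flip that forces at least two output flips, then use periodic configurations to turn this into a density contradiction --- is exactly the paper's strategy. Two points, however, need correcting.

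First, the separation requirement you impose is impossible. If the local rule has window $[-r,r]$, a change at input position $0$ can only affect output positions in $[-r,r]$, so the two altered output cells are at distance at most $2r$, never $\geq 2r+1$. Fortunately this separation is also unnecessary: to prevent the ``disturbance zones'' of different periods from overlapping you only need the period length $|W|$ (not the gap between the two output flips) to be at least $2r+1$, and then $d_G(\INF W\INF,\INF W'\INF)=1/|W|$ while $d_G(f(\INF W\INF),f(\INF W'\INF))\geq 2/|W|$ follows directly. Relatedly, the hypothesis should be that the minimal neighborhood has size $\geq 2$ (radius $\geq 1$ after centering), not radius $\geq 2$; by minimality the rule then automatically depends on \emph{both} extreme coordinates, so there is no ``only one side'' case to treat separately.

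Second, the step you flag as the main obstacle --- why a single flip can be made to change two output cells --- is the actual content of the proof, and your proposed resolution does not address it. The paper handles it with a short pigeonhole argument: from minimality one gets words $u,v$ of length $r-1$ (with $r$ the neighborhood size) and symbols $a\neq b$, $c\neq d$ with $f(ua)\neq f(ub)$ and $f(cv)\neq f(dv)$. Placing each symbol $s\in\Sigma$ at matrix position $(f(us),f(sv))$, one sees that $a,b$ lie in different rows and $c,d$ in different columns, so some pair among $\{a,b\}\times\{c,d\}$, say $(a,c)$, lies in different rows \emph{and} different columns. Then $f(uav)$ and $f(ucv)$ differ in at least two coordinates, and the periodic points $\INF(uav)\INF$, $\INF(ucv)\INF$ give the contradiction. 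This is the missing ingredient in your sketch.
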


\begin{proof}
Let $f$ be a CA with minimal connected neighborhood of size $r > 1$. Since the neighborhood is minimal, there exist $u$ and $v$ with $|u| = |v| = r - 1$ such that $f(ua) \neq f(ub)$ for some $a, b \in \Sigma$, and $f(cv) \neq f(dv)$ for some $c, d \in \Sigma$. Let $M$ be a $\Sigma \times \Sigma$-matrix over $2^\Sigma$ defined by \[ s \in M_{i,j} \iff (f(us), f(sv)) = (i, j). \]

Clearly, every $s \in \Sigma$ occurs in exactly one coordinate $(i, j)$ of $M$. Since $f(ua) \not= f(ub)$, $a$ and $b$ occur in different rows in the matrix $M$. Similarly, $c$ and  $d$ occur in different columns. Therefore, one of the pairs $\{a, c\}$, $\{a, d\}$, $\{b, c\}$ or $\{b, d\}$, say $\{a, c\}$, has the property that $a$ and $c$ occur in different rows and different columns. Then $f(u a v)$ and $f(u c v)$ differ in at least two coordinates.

Now consider the configurations $x = \INF (u a v) \INF$ and $y = \INF (u c v) \INF$. We have $d_G(x, y) = \frac{1}{2r - 1}$, but by the previous discussion, $d_G(f(x), f(y)) \geq \frac{2}{2r - 1}$, and $f$ cannot be contracting. \qed
\end{proof}

\begin{corollary}
\label{corollary:TrivialIsometries}
In the full shift, a CA whose global map is a G-isometry is a composition of a shift and a symbol permutation.
\end{corollary}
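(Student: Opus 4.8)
The plan is to bootstrap from Proposition~\ref{proposition:ContractingHasRadiusOne}, which already does the geometric heavy lifting. The first step is to observe that a G-isometry $f$ is automatically G-contracting, since $d_G(x,y) = d_G(f(x),f(y)) \geq d_G(f(x),f(y))$. Hence Proposition~\ref{proposition:ContractingHasRadiusOne} applies and the minimal connected neighborhood of $f$ has size at most $1$. It cannot be empty, since a constant CA maps $\Sigma^\Z$ onto a single point and is therefore not an isometry unless $|\Sigma| = 1$ (a case in which the claim is vacuous). So the neighborhood is a singleton $\{k\}$ with $k \in \Z$, and $f(x)_i = \phi(x_{i+k})$ for a fixed local rule $\phi : \Sigma \to \Sigma$. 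Equivalently, $f = \sigma^k \circ \pi$, where $\pi : \Sigma^\Z \to \Sigma^\Z$ is the coordinatewise application $\pi(x)_i = \phi(x_i)$.

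It then remains to verify that $\phi$ is a bijection. Since $\sigma$ is a G-isometry and $f$ is assumed to be one, so is $\pi = \sigma^{-k} \circ f$. If $\phi$ failed to be injective, say $\phi(a) = \phi(b)$ with $a \neq b$, then the unary configurations $x = \INF a \INF$ and $y = \INF b \INF$ would satisfy $d_G(x,y) = 1$, whereas $\pi(x) = \INF \phi(a) \INF = \INF \phi(b) \INF = \pi(y)$ forces $d_G(\pi(x),\pi(y)) = 0$, contradicting the isometry property of $\pi$. Thus $\phi$ is injective, and since $\Sigma$ is finite it is a permutation of $\Sigma$. Therefore the identity $f = \sigma^k \circ \pi$ exhibits $f$ as a composition of a shift and a symbol permutation.

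I expect no genuine obstacle here: all the difficulty is already packaged in Proposition~\ref{proposition:ContractingHasRadiusOne}, and what is left is the one-line observation that a non-injective symbol rule collapses the distance between two unary configurations. The only points requiring a moment's care are the degenerate one-letter alphabet and the exclusion of the empty (constant) neighborhood, both dispatched as above; everything else is routine.
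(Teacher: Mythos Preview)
Your argument is correct and is precisely the intended one: the paper states the corollary without proof, treating it as an immediate consequence of Proposition~\ref{proposition:ContractingHasRadiusOne}, and the details you supply (isometry $\Rightarrow$ contracting $\Rightarrow$ radius one, then ruling out non-injective symbol rules via unary points) are exactly the routine steps that justify this.
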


\begin{corollary}
\label{corollary:ExpandingIsIsometry}
In the full shift, a CA whose global map is G-expanding is a G-isometry.
\end{corollary}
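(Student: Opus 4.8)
The plan is to reduce the statement to Proposition~\ref{proposition:ContractingHasRadiusOne}. So let $f$ be a cellular automaton whose global map is G-expanding, meaning $d_G(x,y) \le d_G(f(x),f(y))$ for all $x,y \in \Sigma^\Z$.

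First I would show that $f$ is injective. Suppose it is not. It is classical that a cellular automaton on the full shift is injective if and only if it is injective on periodic configurations, so non-injectivity gives distinct periodic points $p \neq q$ with $f(p) = f(q)$. If $N$ is a common period, then $p$ and $q$ differ in at least one out of any $N$ consecutive coordinates, so $d_B(p,q) \ge 1/N$ and hence $d_G(p,q) > 0$ in both global pseudometrics; on the other hand $d_G(f(p),f(q)) = 0$. This contradicts G-expansiveness, so $f$ is injective, hence (by Hedlund's theorem) a bijection with a cellular automaton inverse $f^{-1}$.

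Now I would pass to the inverse. Since $f$ is a G-homeomorphism, substituting $u = f(x)$ and $v = f(y)$ into the expansiveness inequality yields $d_G(f^{-1}(u),f^{-1}(v)) \le d_G(u,v)$ for all $u,v$; that is, $f^{-1}$ is G-contracting. By Proposition~\ref{proposition:ContractingHasRadiusOne}, $f^{-1}$ has a neighborhood of size $1$, so $f^{-1}(x)_i = g(x_{i+k})$ for some fixed $k \in \Z$ and some map $g : \Sigma \to \Sigma$. As $f^{-1}$ is bijective, $g$ must be a permutation of $\Sigma$ (a collision $g(a) = g(b)$ with $a \neq b$ would let $f^{-1}$ send the two corresponding unary configurations to the same point). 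Hence $f^{-1}$ is a composition of a power of the shift and a symbol permutation, and therefore so is $f = (f^{-1})^{-1}$. The shift is a G-isometry by assumption, and a cellwise symbol permutation preserves the Hamming distance on every window and so is a G-isometry as well; their composition is then a G-isometry, which is the claim.

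The one step that is not a routine computation is the reduction to injectivity: everything else is formal manipulation of the pseudometric together with the bookkeeping that the inverse of a size-$1$ automaton is again a shift composed with a symbol permutation. Trying instead to prove directly, in the spirit of Proposition~\ref{proposition:ContractingHasRadiusOne}, that a G-expanding automaton has a size-$1$ neighborhood looks difficult, since a reversible automaton of neighborhood size at least $2$ has no collapsing pair of configurations at all, so the obstruction to being expanding is visible only through its inverse rather than through its local rule. \qed
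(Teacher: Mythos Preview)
Your proof is correct and follows essentially the same route as the paper: show $f$ is injective, pass to the CA inverse, apply Proposition~\ref{proposition:ContractingHasRadiusOne} to it, and read off that both maps are isometries. Your justification of injectivity via periodic points is in fact more careful than the paper's one-line assertion, since G-expansiveness a priori only yields injectivity modulo $\sim_G$, and the detour through periodic configurations (where $\sim_G$-equivalence coincides with equality) is precisely what upgrades this to Cantor injectivity.
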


\begin{proof}
An expanding map must be injective, so such a CA $f$ must have an inverse $f^{-1}$. Since $f$ is G-expanding, $f^{-1}$ is a G-contracting injective map, and thus has radius $1$, and must in fact be a G-isometry. Then $f$ is also a G-isometry. \qed
\end{proof}

We can extend Corollary~\ref{corollary:TrivialIsometries} as follows (using a completely different proof):

\begin{theorem}
\label{theorem:IsometriesOnNaturalSpaces}
Let $X \subset \Sigma^\Z$ be a subshift such that for some $0 \in \Sigma$ we have ${}^\infty 0^\infty \in X$, and for all $w \sqsubset X$ and all symbols $s \sqsubset w$ there is a $p$-periodic point $x \in X$ such that $w \sqsubset x$, and ${}^\infty(s0^{p-1})^\infty \in X$. Then, all G-isometric CA on $X$ have neighborhood size $1$.
\end{theorem}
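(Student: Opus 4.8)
The proof I have in mind is by contradiction. Suppose $f$ is a $G$-isometric cellular automaton on $X$ whose minimal neighborhood has size $r\geq 2$; composing with a power of the shift (itself a $G$-isometry, which does not change the neighborhood size) we may take the neighborhood to be $\{0,1,\dots,r-1\}$, and we write $\phi$ for the local rule and $t=\phi(0^r)$, so that $f(\INF 0\INF)=\INF t\INF$.

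The first step is to read off the behaviour of $\phi$ on near-constant windows from the isometry property applied to the sparse periodic points $P_s^p=\INF(s0^{p-1})\INF$, which the hypothesis makes available in $X$ for every nonzero symbol $s$ and arbitrarily large $p$. Since $d_G(P_s^p,\INF 0\INF)=1/p$ and $f(P_s^p)$ can differ from $\INF t\INF$ only at the $r$ coordinates per period whose local window straddles the lone $s$, isometry forces exactly one offset $m(s)\in\{0,\dots,r-1\}$ to be ``active'': $\phi(0^{m(s)}s0^{r-1-m(s)})=:\pi(s)\neq t$, while $\phi(0^m s0^{r-1-m})=t$ for $m\neq m(s)$. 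Comparing the disagreement densities of $f(P_s^p)$ and $f(P_{s'}^q)$, which one computes explicitly from the arithmetic progressions $p\Z$, $q\Z$, $p\Z-m(s)$, $q\Z-m(s')$, and again using isometry, one obtains $\pi(s)\neq\pi(s')$ for $s\neq s'$ --- so $\pi$ (with $\pi(0):=t$) is a permutation of $\Sigma$ --- together with the congruence $m(s)\equiv m(s')\pmod{\gcd(p,q)}$; choosing $p,q$ common and large (this is where the hypothesis's clause that one period serves a prescribed word together with all its symbols, plus a bootstrapping step to enlarge the period, is used, and it requires some care) pins $m(s)$ to a constant $m_0$. Equivalently: $\phi(w)=\pi(w_{m_0})$ for every $w\sqsubset X$ of length $r$ having at most one nonzero symbol.

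Next I would invoke minimality. If $\phi(w)=\pi(w_{m_0})$ held for all length-$r$ words of $X$, then $f$ would have the one-point neighborhood $\{m_0\}$, contradicting $r\geq 2$; hence $\phi(w^*)\neq\pi(w^*_{m_0})$ for some $w^*\sqsubset X$ of length $r$, and by the previous paragraph $w^*$ carries at least two nonzero symbols, within distance $r-1$. (One checks from minimality and the formula above that such a $w^*$ even arises from the essentiality of the endpoint $0$ or $r-1$ of the neighborhood, whichever differs from $m_0$: a witness padded by $0$'s there would contradict the formula.) To conclude, one threads $w^*$ into a periodic point $z$ of $X$ --- periodic points are dense in $X$ by the hypothesis --- and compares, via isometry, the disagreement densities of $z$ and of $f(z)$ against suitable reference points (the all-$0$ point and the sparse points $P_s^p$): the established shape of $\phi$ away from the cluster $w^*$ makes the number of non-$t$ coordinates of $f(z)$ per period over-determined, so a carefully engineered $z$ yields a strict inequality and contradicts isometry.

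I expect this last step to be the main obstacle, and it is presumably what the authors mean by a proof ``completely different'' from that of Proposition~\ref{proposition:ContractingHasRadiusOne}. Over the full shift one simply takes $z=\INF(0^L w^* 0^L)\INF$ and reads off the discrepancy, but in a general $X$ one may neither flip a symbol of a configuration nor pad a word by $0$'s while staying in $X$; the argument must instead synthesize the required pair of configurations purely from what the hypothesis grants --- sparse single-symbol periodic points that can be made to share a period with any prescribed word, and periodic points threading any prescribed word --- and control their otherwise uncontrolled ``filler'' parts so that the density bookkeeping goes through.
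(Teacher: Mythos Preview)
Your opening step is right and matches the paper, but you have over-engineered it. Once $p$ exceeds the radius $r$, the values $m(s)$ and $\pi(s)$ are read off directly from the local rule applied to windows $0^m s 0^{r-1-m}$, hence are automatically independent of $p$; and comparing $P_s^p$ with $P_{s'}^p$ for a \emph{common} $p$ (which the hypothesis supplies by taking a word $w$ containing both $s$ and $s'$) already forces $m(s)=m(s')$. No $\gcd$ congruences or bootstrapping are needed.

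The genuine gap is your Step~3, which you yourself flag as the obstacle and do not carry out. The paper closes it, and does so more directly than by contradiction: rather than isolating one offending word $w^*$, it shows positively that for every word $w=usv$ and every symbol $s$ in it, the image $f(x)$ of a periodic point $x$ threading $w$ (with $s$ at position $0$) carries $g(s)$ at the shifted position. The mechanism is exactly the pair of comparisons you name but do not execute: first, $d_G(x,\INF 0\INF)=d_G(f(x),\INF a\INF)$ pins down the number of non-$a$ coordinates of $f(x)$ per period as equal to the number of non-$0$ coordinates of $x$; second, with that count fixed, the equation $d_G(x,y)=d_G(f(x),f(y))$ for $y=\INF(s0^{p-1})\INF$ (same period $p$, furnished by the hypothesis) forces $f(x)$ to have $g(s)$ precisely at the offset position---any other value there produces at least one surplus disagreement. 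Letting $s$ range over all positions of all words determines the local rule everywhere and gives neighborhood size~$1$.

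The ``uncontrolled filler'' you worry about in your last paragraph is harmless: whatever the filler, it contributes identically to the non-$0$ count of $x$ and, via the first comparison, to the non-$a$ count of $f(x)$, so it cancels in the second comparison. No engineering of $z$ is required. Your contradiction framing and the appeal to minimality of the neighborhood are thus unnecessary detours; the paper's direct argument is shorter and delivers precisely the missing step.
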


\begin{proof}
Suppose that a CA $f : X \to X$ is a G-isometry, and denote $f({}^\infty 0^\infty) = {}^\infty a^\infty$. Let $p \in \N$. Every point of the form ${}^\infty(s0^{p-1})^\infty$ with $s \in S$ is mapped to some translate of ${}^\infty(ta^{p-1})^\infty$ with $t \in S$. By comparing the images of distinct points, we see that $f$ acts on them as some shift $\sigma^{n_p}$ composed with some symbol permutation $g_p$. Also, $n_p$ and $g_p$ are constants (denote them by $n$ and $g$) when $p$ is greater than the radius $r$ of $f$.

Let then $w \sqsubset X$, and write it in the form $usv$ for some $u,v \sqsubset X$ and $s \in S$. By the assumption on $X$, there exist $p$-periodic points $x = {}^\infty (svtu)^\infty$ and $y = {}^\infty(s0^{p-1})^\infty$ in $X$ for some $t \sqsubset X$ and $p > r$. By comparing $f(x)$ and ${}^\infty a^\infty$, we see that the percentage of $0$'s in $x$ equals the percentage of $a$'s in $f(x)$. Then, comparing $f(x)$ and $f(y) = \sigma^n(g(y))$, we see that the symbol $g(s)$ must appear in $f(x)$ exactly $n$ positions from the place of $s$ in $x$. Since $s$ was an arbitrary symbol appearing in $w$, we see that $f = \sigma^n \circ g$. \qed
\end{proof}

In particular, isometric cellular automata have neighborhood size $1$ on the golden mean shift and the even shift. In fact, so do all contracting CA on the golden mean shift, as the proof of Proposition~\ref{proposition:ContractingHasRadiusOne} works also in this case.

\begin{example}
The previous theorem does not hold as such for contracting CA. Namely, let $X \subset \{0,1\}^\Z$ be the SFT where $111$ is forbidden (which clearly satisfies the conditions of the theorem), and let $f$ be the CA that transforms every occurrence of $01$ to $00$. Then $f$ is contracting, since the only case in which $x_i = y_i$ but $f(x)_i \neq f(y)_i$ holds is $\begin{smallmatrix} x & = & \ldots & 1 & 1 & \ldots \\ y & = & \ldots & 0 & 1 & \ldots \end{smallmatrix}$. But in this case the upper-left $1$ is preceded by a $0$, and $f(x)_{i-1} = f(y)_{i-1}$, so that the offending difference is canceled.
\end{example}

\begin{example}
A CA on the disjoint union of two full shifts acting as the identity on one and as $\sigma$ on the other is isometric, but has neighborhood size $2$.
\end{example}

\begin{example}
Consider the product of $\{0,1\}^\Z$ with itself, the symbol permutation $f : (x,y) \mapsto (y,x)$ and the partial shift $g = \ID \times \sigma$. Then $f$ is clearly G-isometric, but $g \circ f \circ g^{-1}$ is not by Theorem~\ref{theorem:IsometriesOnNaturalSpaces}. Thus G-isometry is not a conjugacy invariant.
\end{example}

\begin{question}
What is the closure of the class of G-isometric cellular automata under conjugacy?
\end{question}

\begin{definition}
We say a B-continuous B-shift-commuting map $f$ on a subshift $X$ is \emph{CA-like} if for each $x \in X$, there exists a cellular automaton $g_x$ such that $f(x) \sim_B g_x(x)$.
\end{definition}

\begin{theorem}
A CA-like function $f$ on $\Sigma^\Z$ is a CA.
\end{theorem}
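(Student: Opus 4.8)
The plan is to exploit the fact that there are only countably many cellular automata, and to run a Baire category argument on the Besicovitch space to upgrade the pointwise condition in the definition of CA-likeness to a uniform one.

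First I would topologize the set $\CA$ of all cellular automata on $\Sigma^\Z$ by the metric $\rho(g,h)=\sup_{x\in\Sigma^\Z}d_B(g(x),h(x))$: two distinct cellular automata $g\neq h$ disagree on some word $w$, and $w$ occurs with positive density in a suitable configuration, so $\rho$ is a genuine metric; hence $(\CA,\rho)$ is a countable, and therefore totally disconnected, metric space. For each $g\in\CA$ set $C_g=\{x\in\Sigma^\Z\mid d_B(f(x),g(x))=0\}$. Since $f$, $g$ and $d_B$ are B-continuous, $C_g$ is B-closed; since $f$ and $g$ are B-shift-commuting, $C_g$ is shift-invariant; and B-continuity forces $f$ to respect $\sim_B$, so $C_g$ is $\sim_B$-saturated. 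CA-likeness says precisely that $\bigcup_{g\in\CA}C_g=\Sigma^\Z$.

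Now the Besicovitch space (more precisely, its Hausdorff quotient by $\sim_B$) is a complete metric space, so by the Baire category theorem, being a countable union of B-closed sets, some $C_{g_0}$ has nonempty B-interior: $f$ coincides with the genuine cellular automaton $g_0$, modulo $\sim_B$, on a whole ball $B_\epsilon(x_0)$. It is also worth recording at this point that the interiors $U_g=\mathrm{int}(C_g)$ are pairwise disjoint — if a ball lay in $C_g\cap C_{g'}$ with $g\neq g'$, we could sprinkle copies of a word on which $g$ and $g'$ disagree into its centre, at positive density, contradicting $d_B(f,g)=d_B(f,g')=0$ there — that they are shift-invariant, and that $\bigsqcup_g U_g$ is a dense open subset of $\Sigma^\Z$, since its complement lies in the meager set $\bigcup_g\partial C_g$.

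The remaining, and main, task is the globalization: to show that exactly one $U_g$ is nonempty, equivalently that $C_{g_0}=\Sigma^\Z$. Once this is established, $f$ agrees with $g_0$ on a dense set and hence, by B-continuity, everywhere, so $f$ is the cellular automaton $g_0$. The idea is that the ``locally witnessing automaton'' must be constant: $\Sigma^\Z$ is B-path-connected (Definition~\ref{def:BPath} and its refinements make this explicit) while $\CA$ is totally disconnected, so it is enough to control the behaviour along the meager seam $\bigcup_g\partial C_g$ where local constancy could fail. Concretely, if two interiors $U_{g_0}$ and $U_{g_1}$ with $g_0\neq g_1$ had intersecting closures, I would pick a word $w$ with $\gamma_{g_0}(w)\neq\gamma_{g_1}(w)$, take B-close configurations $x\in U_{g_0}$ and $\xi\in U_{g_1}$, and perturb $x$ by sprinkling in copies of $w$ — arranged in long enough blocks to defeat the a priori unknown radius of the witnessing automaton and at small enough density to remain inside both $C_{g_0}$ and $C_{g_1}$ — producing a configuration in $C_{g_0}\cap C_{g_1}$ on which $g_0$ and $g_1$ differ at positive density, a contradiction. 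I expect this sprinkling bookkeeping (handling the unknown witness radius, and verifying that the perturbed configuration genuinely stays in both closed sets) to be the technical heart of the proof; the countability of $\CA$ and the Baire step are routine, but CA-likeness has to be used essentially here, since a merely B-continuous B-shift-commuting map need not be a cellular automaton. \qed
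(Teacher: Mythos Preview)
Your Baire-category opening is sound: the Besicovitch quotient is complete, the $C_g=\{x:f(x)\sim_B g(x)\}$ are B-closed, they cover $\Sigma^\Z$, and the interiors $U_g$ are pairwise disjoint for the reason you give. The divergence from the paper is in the globalization step, and there your argument has a genuine gap.

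You claim that if $\overline{U_{g_0}}$ and $\overline{U_{g_1}}$ meet, you can ``sprinkle'' a distinguishing word $w$ into $x\in U_{g_0}$ at small density and land in $C_{g_0}\cap C_{g_1}$. Staying in $C_{g_0}$ is fine (you are inside the open set $U_{g_0}$), but nothing you have written forces the perturbed point into $C_{g_1}$: being B-close to some $\xi\in U_{g_1}$ does not put a perturbation of $x$ into $C_{g_1}$ unless the perturbation actually lands in the ball around $\xi$ contained in $U_{g_1}$ --- and then it would already lie in $U_{g_0}\cap U_{g_1}=\emptyset$, without any sprinkling. So the sprinkling is doing no work, and the step ``produce a configuration in $C_{g_0}\cap C_{g_1}$ on which $g_0,g_1$ differ at positive density'' is unjustified. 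Even granting disjointness of the closures, you would still need a Sierpi\'nski-type theorem for the (non-compact) Besicovitch space to conclude that only one $U_g$ is nonempty; you gesture at path-connectedness, but along an arbitrary path the sets $p^{-1}(C_g)$ overlap (e.g.\ at ${}^\infty 0^\infty$ many CA agree), so they do not give a partition of $[0,1]$ into closed sets.

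The paper's proof is organised precisely to avoid this overlap. It fixes a single \emph{generic} point $z$ (for the uniform Bernoulli measure) and, for an arbitrary $x$, takes the explicit averaging path $p(r)=A_{\Sigma^\Z}(r,x,z)$. For every $r\in(0,1]$ the configuration $p(r)$ contains \emph{every} word with positive density, so the witnessing automaton $g_{p(r)}$ is \emph{uniquely} determined --- this is the key idea you are missing. One then checks directly that $r\mapsto g_{p(r)}$ is continuous into the countable $T_1$ space $\CA$, and Sierpi\'nski's theorem on $(0,1]$ forces it to be constant, equal to $g_z$. Hence $f(x)\sim_B g_z(x)$ for every $x$. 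The Baire step is thus replaced by the genericity of $z$, which simultaneously supplies existence of a canonical witness and the well-definedness needed to run a connectedness argument.
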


\begin{proof}
We show that $f$ behaves as the same cellular automaton on all $x \in \Sigma^\Z$. For this, let $z$ be a generic point for the uniform Bernoulli measure (see \cite{Wa82}), which means that for all $w \in \Sigma^*$ we have
\[ \lim_{n \longrightarrow \infty} \frac{1}{2n+1}|\{i \in [-n,n] \;|\; z_{[i,i+|w|-1]} = w\}| = |\Sigma|^{-|w|}. \]
Let $p : [0, 1] \to S^\Z$ be a path from $x$ to $z$ defined by $p(r) = A_{\Sigma^\Z}(r,x,z)$ (where the definition of $A$ is extended to $\Sigma^\Z$ in the obvious way).

For all $r \in (0, 1]$ the cellular automaton $g_{p(r)}$ is uniquely defined by the genericity of $z$. We define a topology for cellular automata by the pseudometrics
\[ d_w(a, b) = \left\{\begin{array}{rl} 0 & \mbox{if } a({}^\infty0w0^\infty)_0 = b({}^\infty0w0^\infty)_0 \\ 1 & \mbox{otherwise} \end{array}\right. \]
for bidirectional finite words $w$, where $0 \in \Sigma$ is fixed. Denote the space of all CA on $\Sigma^\Z$ with this topology by $\CA$.

Consider a point $r \in (0,1]$ and a word $w \in \Sigma^*$, and note that $w$ occurs in $p(r)$ with asymptotic density at least $\alpha = r|S|^{-|w|}$. Let $\epsilon$ be such that $f$ changes by at most $\alpha/2$ in the $\epsilon$-neighborhood of $p(r)$, and let $\delta$ be such that the image of $p$ changes by at most $\min(\epsilon, \alpha/2)$ in the neighborhood $(r - \delta, r + \delta)$. Then for all automata $h \in \{g_{p(t)} | r - \delta \leq t \leq r + \delta\}$, we have that $d_w(h, g_{p(r)}) = 0$. This is because $w$ occurs with density at least $\alpha/2$ in $p(t)$, and thus changing the image of $w$ would result in $d(f(p(r)), f(p(t))) > \alpha/2$, a contradiction. Thus, the function $g \circ p : (0, 1] \to \CA$ is continuous.

If $g \circ p$ is not a constant function, we have obtained a nontrivial path in a countable $\mathrm{T}_1$ space $\CA$. This, however, is impossible, since then the preimages of singletons give a partition of an interval $[s, t]$ into a countable number of closed sets, which is a contradiction by a straightforward compactness argument. Thus $g \circ p$ is a constant map whose image is the automaton $g_z$, and then it is easy to see that $f(x) \sim_B g_z(x)$. \qed
\end{proof}

\begin{question}
Is this also true in the Weyl topology?
\end{question}

\section{Nonexistence of Transitive Cellular Automata}
\label{sec:DynamicalProperties}

Let us show how the nontransitivity of CA follows from basic measure theory and a pigeonhole argument. For this, we need a lemma.

\begin{lemma}[\cite{St01}]
\label{lemma:Binomials}
For all $n,m,p \in \N$ we have
\[ \binom{mn}{pn} < \frac{1}{\sqrt{2\pi}}n^{-\frac{1}{2}}\frac{m^{mn+\frac{1}{2}}}{(m - p)^{(m - p)n + \frac{1}{2}} \cdot p^{pn + \frac{1}{2}}} \]
\end{lemma}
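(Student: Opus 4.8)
The plan is to derive the inequality directly from Stirling's formula, since the right-hand side is exactly the product of the three Stirling \emph{principal terms} $\sqrt{2\pi N}(N/e)^N$ for $N = mn$, $N = (m-p)n$ and $N = pn$, arranged so that the exponential factors $e^{-mn}$, $e^{(m-p)n}$, $e^{pn}$ cancel. So the first step is to write
\[
\binom{mn}{pn} = \frac{(mn)!}{((m-p)n)!\,(pn)!}
\]
and substitute a two-sided Stirling bound of the form
\[
\sqrt{2\pi N}\left(\frac{N}{e}\right)^N < N! < \sqrt{2\pi N}\left(\frac{N}{e}\right)^N e^{1/(12N)},
\]
using the lower bound in the two denominator factorials and the upper bound in the numerator factorial.

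The second step is the bookkeeping: after substitution, the factor $(mn/e)^{mn}$ in the numerator meets $((m-p)n/e)^{(m-p)n}$ and $(pn/e)^{pn}$ in the denominator; since $mn = (m-p)n + pn$, the powers of $e$ cancel completely, and collecting the powers of $n$ and the $\sqrt{2\pi}$ factors yields exactly
\[
\binom{mn}{pn} < \frac{1}{\sqrt{2\pi}}\, n^{-\frac12}\,
\frac{m^{mn+\frac12}}{(m-p)^{(m-p)n+\frac12}\, p^{pn+\frac12}}
\cdot e^{1/(12mn)}.
\]
Thus everything reduces to absorbing the stray factor $e^{1/(12mn)}$, which is where a little care is needed: one wants a genuine strict inequality \emph{without} that factor.

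The third step, and the only real obstacle, is removing $e^{1/(12mn)}$. The cleanest route is to use a \emph{sharper} Stirling estimate in the numerator, namely Robbins' bounds $N! < \sqrt{2\pi N}(N/e)^N e^{1/(12N)}$ together with the matching lower bound $N! > \sqrt{2\pi N}(N/e)^N e^{1/(12N+1)}$ for the denominators; then the leftover factor becomes $\exp\!\big(\tfrac{1}{12mn} - \tfrac{1}{12(m-p)n+1} - \tfrac{1}{12pn+1}\big)$, and since $\tfrac{1}{12(m-p)n+1} + \tfrac{1}{12pn+1} > \tfrac{1}{12(m-p)n} \cdot\! \big(1+o(1)\big)$ is awkward at the boundary cases $p=0$ or $p=m$, it is cleaner to first reduce to $1 \le p \le m-1$ (the cases $p=0$ and $p=m$ give $\binom{mn}{pn}=1$, and the right-hand side is easily seen to exceed $1$ there by a direct check), and for $1\le p\le m-1$ one has $(m-p)n\ge n$ and $pn\ge n$, so the leftover exponent is at most $\tfrac{1}{12mn} - \tfrac{2}{12n+1} < 0$ for all $n\ge 1$, making the stray factor strictly less than $1$. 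This is exactly the source reference \cite{St01}, so in the paper it suffices to cite it; but if a self-contained argument is wanted, the Robbins-bound computation above is the key step, and the boundary cases $p\in\{0,m\}$ are the subtlety one must not overlook.
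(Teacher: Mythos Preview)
The paper does not prove this lemma at all; it is stated with a citation to \cite{St01} and used as a black box. Your Stirling/Robbins approach is the standard way to obtain such bounds and is essentially what the cited reference does, so in spirit you are in agreement with the paper.

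There is, however, a genuine slip in your third step. From $(m-p)n\ge n$ and $pn\ge n$ you conclude that the leftover exponent is \emph{at most} $\tfrac{1}{12mn}-\tfrac{2}{12n+1}$, but the inequality goes the other way: larger denominators make $\tfrac{1}{12(m-p)n+1}$ and $\tfrac{1}{12pn+1}$ \emph{smaller}, so you are subtracting less, and what you have written is a \emph{lower} bound for the exponent, not an upper bound. The fix is immediate once you bound in the right direction: for $1\le p\le m-1$ and $n\ge 1$ one has $(m-p)n\le (m-1)n$, hence $12(m-p)n+1\le 12mn-12n+1<12mn$, so already
\[
\frac{1}{12(m-p)n+1}>\frac{1}{12mn},
\]
and therefore
\[
\frac{1}{12mn}-\frac{1}{12(m-p)n+1}-\frac{1}{12pn+1}<-\frac{1}{12pn+1}<0,
\]
which is what you need. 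Your treatment of the boundary cases $p\in\{0,m\}$ is also slightly glib: the right-hand side is not ``easily seen to exceed $1$'' there but is in fact undefined (a factor $0^{1/2}$ appears in the denominator), so the inequality should be read as vacuous at those endpoints, or one should simply restrict to $1\le p\le m-1$, which is all the paper ever uses.
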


To make this lemma more easily usable in the proof of Theorem~\ref{theorem:NoTransitiveCA}, we set $p = 1$ and compare with an exponential function.

\begin{lemma}
\label{lemma:BinomialUpperBound}
For all $1 < k \in \R$ and for all $0 < a \in \R$, we have that for some $m$ and all large enough $n$ divisible by $m$,
\[ \binom{n}{n/m} \leq k^{na}   \]
\end{lemma}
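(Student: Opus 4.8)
The plan is to apply Lemma~\ref{lemma:Binomials} with $p = 1$, estimate the resulting expression, and choose $m$ large enough that the exponential growth rate drops below $a \ln k$. Setting $p = 1$ in the lemma gives
\[
\binom{mn}{n} < \frac{1}{\sqrt{2\pi}} \, n^{-1/2} \, \frac{m^{mn + 1/2}}{(m-1)^{(m-1)n + 1/2} \cdot 1^{n + 1/2}}.
\]
Here I would first reindex: the statement of Lemma~\ref{lemma:BinomialUpperBound} is about $\binom{n}{n/m}$ with $n$ divisible by $m$, so writing $n = mn'$ I need to bound $\binom{mn'}{n'}$, which is exactly what the displayed inequality above provides with $n$ replaced by $n'$. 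So it suffices to show that $\binom{mn'}{n'} \le k^{m n' a}$ for all large $n'$, for a suitable $m$.

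Next I would take logarithms and extract the exponential rate. Ignoring the polynomially small factor $\frac{1}{\sqrt{2\pi}} n'^{-1/2} m^{1/2} (m-1)^{-1/2}$ (which is at most $1$ for large $n'$, or in any case grows subexponentially), the dominant term in $\ln \binom{mn'}{n'}$ is
\[
n'\bigl( m \ln m - (m-1)\ln(m-1) \bigr) = n' \cdot \ln \frac{m^m}{(m-1)^{m-1}}.
\]
Thus the per-$n'$ exponential rate of $\binom{mn'}{n'}$ is $c_m := \ln \frac{m^m}{(m-1)^{m-1}}$, while the target $k^{mn'a}$ has rate $m a \ln k$. So I need $c_m < m a \ln k$, i.e. $\frac{1}{m}\ln\frac{m^m}{(m-1)^{m-1}} < a \ln k$. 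A short computation shows $\frac{1}{m}\ln\frac{m^m}{(m-1)^{m-1}} = \ln m - \frac{m-1}{m}\ln(m-1) \to 0$ as $m \to \infty$ (since $\ln m - \ln(m-1) \to 0$ and $\frac{1}{m}\ln(m-1)\to 0$), so the left-hand side can be made smaller than the fixed positive quantity $a \ln k$ by choosing $m$ large. Fix such an $m$.

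Finally, with $m$ fixed so that $c_m < m a \ln k$, I would absorb the subexponential prefactor: since $c_m < m a \ln k$ strictly, for all sufficiently large $n'$ we have $\frac{1}{\sqrt{2\pi}} n'^{-1/2} \sqrt{m/(m-1)} \cdot e^{c_m n'} \le e^{m a \ln k \cdot n'} = k^{m n' a}$, because the ratio of the two sides tends to $0$. Unwinding $n = mn'$, this gives $\binom{n}{n/m} \le k^{na}$ for all large enough $n$ divisible by $m$, as required. The only mildly delicate point — and the main thing to be careful about — is the asymptotic claim $\frac{1}{m}\ln\frac{m^m}{(m-1)^{m-1}} \to 0$; everything else is routine bookkeeping with the prefactor. \qed
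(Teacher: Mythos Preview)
Your proof is correct and follows essentially the same approach as the paper: apply Lemma~\ref{lemma:Binomials} with $p=1$, then observe that the exponential growth rate (per $n$) can be made arbitrarily small by choosing $m$ large, and absorb the subexponential prefactor. The only cosmetic difference is that the paper splits the bound into three explicit factors $(n/m)^{-1/2}$, $(m^{1/m})^n$, and $(m/(m-1))^{n+1/2}$, each bounded by $k^{na/3}$, whereas you take logarithms and argue via the combined rate $c_m/m \to 0$; these are the same computation in different clothing.
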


\begin{proof}
When the conditions are satisfied, Lemma~\ref{lemma:Binomials} gives
\begin{align*}
\binom{n}{n/m} &< \frac{1}{\sqrt{2\pi}}(n/m)^{-\frac{1}{2}}\frac{m^{n+\frac{1}{2}}}{(m - 1)^{\frac{m - 1}{m}n + \frac{1}{2}}} \\
&\leq \frac{1}{\sqrt{2\pi}}(n/m)^{-\frac{1}{2}}m^{(n/m)}\left(\frac{m}{m-1}\right)^{\frac{m - 1}{m}n+\frac{1}{2}} \\
&\leq k^{n\frac{a}{3}} \left(m^{1/m}\right)^n\left(\frac{m}{m-1}\right)^{n+\frac{1}{2}} \leq k^{na}
\end{align*} \qed
\end{proof}

For a word $w \in \B_n(X)$, we denote $[w] = \{ x \in X \;|\; x_{[-\lfloor n/2 \rfloor,\lceil n/2 \rceil]} = w \}$. If $W \subset \B_n(X)$, we denote $[W] = \bigcup_{w \in W} [w]$. For all $X \subset \Sigma^\Z$ we define $\B_n(X) = \{ w \in \Sigma^n \;|\; w \sqsubset X \}$ and $\B^0_n(X) = \{ w \in \Sigma^n \;|\; [w] \cap X \neq \emptyset \}$. We also denote
\[ d_n(x, y) = \sup_{m \geq n} \left\{\frac{H\left(x_{[-m, m]}, y_{[-m, m]}\right)}{2m + 1} \right\} \]
and note that $d(x, y) = \lim d_n(x, y)$. For all $\epsilon > 0$ and $x \in X$, denote $B^n_\epsilon(x) = \{ y \in X \;|\; d_n(x,y) < \epsilon \}$, and note that $B^n_\epsilon(x) \subset B^m_\epsilon(x) \subset B_\epsilon(x)$ whenever $n \leq m$.

\begin{theorem}
\label{theorem:NoTransitiveCA}
No cellular automaton is B-transitive on a positive entropy sofic shift $X \subset \Sigma^\Z$.
\end{theorem}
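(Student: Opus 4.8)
The plan is to derive a contradiction from transitivity by a counting argument: transitivity of a CA $f$ on $X$ forces $f$ to send a small ball $B_\epsilon(x)$ onto a dense set, hence onto a set meeting \emph{every} cylinder $[w]$ with $w \in \B^0_n(X)$ for suitable $n$; but a ball of radius $\epsilon$ around $x$, when $\epsilon$ is small, sits (up to the cellular map $f$, which only spreads differences by the radius $\rho$) inside a cylinder-wise restricted set that is far too "thin" in the Bernoulli sense to cover all of $\B^0_n(X)$. Concretely, I would fix $\epsilon > 0$ small, look at $B^n_\epsilon(x)$ for large $n$: a configuration $y \in B^n_\epsilon(x)$ agrees with $x$ on all but an $\epsilon$-fraction of each central window $[-m,m]$ with $m \ge n$, so the set of central words $y_{[-m,m]}$ that can occur is contained in the Hamming ball of radius $\epsilon(2m+1)$ around $x_{[-m,m]}$, which by Lemma~\ref{lemma:BinomialUpperBound} (setting $k = |\Sigma|^{1/2}$, say, and $a$ proportional to $\epsilon$ suitably, summing the binomials) has size at most $|\Sigma|^{\eta(2m+1)}$ for an $\eta = \eta(\epsilon)$ with $\eta \to 0$ as $\epsilon \to 0$.

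Next I would push this through $f$. Since $f$ has some radius $\rho$, the image $f(B^n_\epsilon(x))$ has its central words $f(y)_{[-m+\rho,m-\rho]}$ determined by $y_{[-m,m]}$, so $|\B^0_{2(m-\rho)+1}(f(B^n_\epsilon(x)))| \le |\Sigma|^{\eta(2m+1)}$ as well, i.e.\ the image, measured by how many length-$\ell$ central patterns it realizes, still has entropy rate at most $\eta$ (plus a vanishing correction from $\rho$). On the other hand, $X$ has positive entropy $h > 0$; if $X$ is sofic I may pass to a mixing SFT (indeed, by Lemma~\ref{lemma:SFTInSofic} there is a mixing SFT $Y \subset X$ of positive entropy, and transitivity of $f$ on $X$ combined with a suitable choice of target cylinder lets me work inside $Y$), so $|\B^0_\ell(X)| \ge |\Sigma|^{h'\ell}$ for some $h' > 0$ and all large $\ell$. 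Choosing $\epsilon$ once and for all so small that $\eta(\epsilon) < h'$, we get that for all large $\ell$ the image $f(B_\epsilon(x))$ realizes strictly fewer than $|\B^0_\ell(Y)|$ central $\ell$-patterns; hence there is a word $w \in \B^0_\ell(Y)$ (indeed a cylinder $[w]$ in $X$) disjoint from the set of central patterns realized by $f(B_\epsilon(x))$, so $f(B_\epsilon(x)) \cap [w] = \emptyset$, i.e.\ $[w]$ is a nonempty open set missing the image of a nonempty open set. That contradicts B-transitivity of $f$ (which requires $f^t(B_\epsilon(x)) \cap [w] \neq \emptyset$ for some $t$, hence $f(B_\epsilon(x))$ — or any iterate's image — to meet every nonempty open set; more precisely one applies the argument to $f^t$, whose radius is $t\rho$ but whose image entropy rate is still bounded by the same $\eta$ since the domain ball is unchanged and the pattern-counting bound is uniform in the number of iterates up to the vanishing $t\rho/m$ term).

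The step I expect to be the main obstacle is making the "image entropy rate is $\le \eta$" claim fully rigorous and uniform in the number of iterations $t$: one must control the Hamming ball count in the $d_n$-sense rather than the raw $\limsup$-sense (this is exactly why the lemma is stated with "for all large enough $n$" and why the filtration $B^n_\epsilon(x) \subset B_\epsilon(x)$ was introduced), and one must ensure the radius-$t\rho$ blow-up from iterating $f$ does not swamp the estimate — handled by first fixing $n$ (hence $\epsilon$, hence $\eta$) and only afterward sending $m \to \infty$ with $t$ and $\rho$ fixed for the final cylinder-choice contradiction. A secondary point to be careful about is the reduction to a mixing SFT: transitivity need not pass to $f$ restricted to $Y$, so instead I would keep $f$ acting on $X$ but use the positive entropy of $Y \subset X$ only to lower-bound $|\B^0_\ell(X)| \ge |\B^0_\ell(Y)|$, and pick the obstructing cylinder $[w]$ from $\B^0_\ell(Y) \subset \B^0_\ell(X)$, which is a legitimate nonempty open subset of $X$.
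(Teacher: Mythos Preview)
Your counting intuition is right --- a Besicovitch $\epsilon$-ball around a point realizes only about $|\Sigma|^{\eta n}$ central $n$-patterns with $\eta \to 0$, while $X$ has $|\Sigma|^{h'n}$ of them --- but the way you convert this into a contradiction with B-transitivity is broken. The object $[w]$ is \emph{not} a B-open set. In the Besicovitch pseudometric, changing a configuration at finitely many coordinates costs distance $0$; in particular, for any $z \in [w]$ one can alter a single coordinate inside the window and obtain $z' \notin [w]$ with $d_B(z,z')=0$, so no B-ball around $z$ lies in $[w]$. (In fact every nonempty cylinder is B-dense in $X$ up to the $\sim_B$-saturation.) Thus ``$f^t(B_\epsilon(x)) \cap [w] = \emptyset$'' has no bearing on B-transitivity, which only concerns B-open sets $U,V$. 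Your parenthetical fix (``apply the argument to $f^t$ with $t$ fixed and then choose $m$ large'') does not help: you end up with a $t$-dependent cylinder $[w_t]$, not a single B-open $V$ missed by all iterates.

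This is exactly the obstacle the paper's proof is engineered to avoid, and it is why the measure enters. The paper uses B-transitivity only once, to write $X = \bigcup_{n,l} X(n,l)$ with $X(n,l) = B^n_\epsilon(f^l(B^n_\epsilon(x)))$; it never again tries to hit a B-open target. A Markov measure $\mu$ on the sofic shift then selects a single pair $(n_0,l_0)$ with $\mu(X(n_0,l_0))>0$, and the bound $\mu([w]) \le \gamma^{n/t}$ converts positive measure into the lower bound $|\B^0_n(Y)| \ge |\Sigma|^{\delta n}$. Only now, with $l_0$ (hence the radius $r$ of $g=f^{l_0}$) \emph{fixed}, does the Hamming-ball count give the contradictory upper bound. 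The role you assigned to ``missing a cylinder'' is played instead by ``positive measure forces many central patterns''; that substitution is the missing idea in your proposal.
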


\begin{proof}
First, we note that such a CA would have to be surjective by Theorem~\ref{theorem:SoficClosed}.

Assume on the contrary that $f : X \to X$ is a surjective transitive CA, and let $\mu$ be a probability measure of $X$ given by a Markov measure on its Shannon cover (see \cite{LiMa95}). Then there exist $\gamma \in (0,1)$ and $t \geq 1$ such that for all $n \in \N$ and $w \in \B_{tn}(X)$ we have $\mu([w]) \leq \gamma^n$.
Let $\delta < \frac{1}{2}$ be such that $|\Sigma|^{\delta (t+1)} \gamma < 1$.

Now let $\epsilon$ and $\epsilon'$ be such that $2\epsilon + 2\epsilon' < \delta$, and the requirements of Lemma~\ref{lemma:BinomialUpperBound} are satisfied for $k = |\Sigma|$, $a = \epsilon'$ and $m \leq \epsilon^{-1}$. Let $x = \INF w \INF \in X$ for some $w \in \B_p(X)$, and denote $B^n = B^n_\epsilon(x)$.

For all $n,l \in \N$, define $X(n, l) = B^n_\epsilon(f^l(B^n))$. 
From the definitions of transitivity and $d_n$, it follows that $\bigcup_{n, l} X(n, l) = X$.
Since $X(n, l)$ is clearly measurable for all $n, l \in \N$, some $Y = X(n_0, l_0)$ must have positive $\mu$-measure. Define $g = f^{l_0}$ and $B = B^n \cap g^{-1}(B^n_\epsilon(Y))$, and let $r$ be the radius of $g$.

Since $\mu(Y) > 0$, we must have $|\B^0_n(Y)| \geq |\Sigma|^{\delta n}$ for all large enough $n$. If not, we have for arbitrarily large $n = tq + t'$ where $t' \in [0, t - 1]$ that
\[ \mu(Y) \leq \mu([\B^0_{tq + t'}(Y)]) \leq \mu([\B^0_{tq}(Y)]) \leq |\Sigma|^{\delta (tq + t')} \gamma^q < (|\Sigma|^{\delta (t+1)} \gamma)^q, \]
which is a contradiction, since the rightmost term is then arbitrarily small.

We define $D_n = \{ v \in \B_n(X) \;|\; \exists u \in \B_n(x) : H(v,u) \leq n\epsilon \}$. Then for all $n \geq n_0$ we have $\B^0_n(B) \subset D_n$, which implies that $|\B^0_n(g(B))| \leq |\Sigma|^{2r}|D_n|$. Since we also have $|D_n| \leq p\binom{n}{\lfloor n \epsilon \rfloor}|\Sigma|^{n\epsilon}$ and $Y \subset B^n_\epsilon(g(B))$, this implies that
\[ |\B_n^0(Y)| \leq \binom{n}{\lfloor n \epsilon \rfloor}|\Sigma|^{n\epsilon}|\B^0_n(g(B))| \leq p \binom{n}{\lfloor n \epsilon \rfloor}^2|\Sigma|^{2r+2n\epsilon}, \]
which is at most $p|\Sigma|^{2r+2n\epsilon+2n\epsilon'}$ by Lemma~\ref{lemma:BinomialUpperBound}. But if $n$ is large enough, we have $|\B^0_n(Y)| \geq |\Sigma|^{\delta n}$, and thus
\[ 1 = |\B_n^0(Y)|/|\B_n^0(Y)| \leq p|S|^{2r + n(2\epsilon + 2\epsilon' - \delta)}, \]
which converges to $0$ as $n$ grows. We have reached a contradiction. \qed
\end{proof}

\bibliographystyle{plain}
\bibliography{//utuhome.utu.fi/iatorm/Stuff/AlgTop/bib}{}

\end{document}